\theoremstyle{plain}
\newtheorem{theorem}{Theorem}[section]
\newtheorem{lemma}{Lemma}[section]
\newtheorem{definition}{Definition}[section]
\newtheorem{remark}{Remark}[section]
\newcommand{\keywords}{\textbf{Key words. }\medskip}
\newcommand{\subjclass}{\textbf{MSC 2010. }\medskip}
\renewcommand{\abstract}{\textbf{Abstract. }\medskip}
\numberwithin{equation}{section}
\begin{document}

\title{Large and very singular solutions to semilinear elliptic equations}

\author{Andrey~E.~Shishkov
\\ RUDN University, Russian Federation
}

\date{}

\maketitle

\begin{abstract}
We consider equation $-\Delta u+f(x,u)=0$ in smooth bounded domain $\Omega\in\mathbb{R}^N$, $N\geqslant2$, with $f(x,r)>0$ in $\Omega\times\mathbb{R}^1_+$ and $f(x,r)=0$ on $\partial\Omega$. We find the condition on the order of degeneracy of $f(x,r)$ near $\partial\Omega$, which is a criterion of the existence-nonexistence of a very singular solution with a strong point singularity on $\partial\Omega$. Moreover, we prove that the mentioned condition is a sufficient condition for the uniqueness of a large solution and conjecture that this condition is also a necessary condition of the uniqueness.
\end{abstract}

\subjclass{35J15, 35J25, 35J61}

\keywords{semilinear elliptic equations, large solutions, very singular solutions, energy estimates}

\section{Introduction and main results}

This paper deals with two problems:

\noindent 1) the uniqueness of large solutions,

\noindent 2) the existence of very singular solutions

\noindent to a semilinear elliptic equation of the form:
\begin{equation}\label{ad1*}
-\Delta u + f(x,u)=0\quad\text{in }\Omega\in\mathbb{R}^N,\ N>1,
\end{equation}
where nonlinear absorption term $f(x,s)>0$ $\forall\,x\in\Omega$, $\forall\,s>0$, degenerates on $\partial\Omega$:
\begin{equation}\label{ad2*}
f(x,s)=0\quad\forall\,x\in\partial\Omega,\qquad f(x,0)=0\quad\forall\,x\in\overline{\Omega}.
\end{equation}
When $f(s)$ is monotonic the existence of the large solution, i.e. a solution of equation \eqref{ad1*} satisfying boundary condition:
\begin{equation}\label{ad3*}
\lim_{d(x)\to0}u(x)=\infty,\quad d(x):=dist(x,\partial\Omega),
\end{equation}
is assosiated  with a well known Keller--Osserman \cite{Kel,Osser} condition on the growth of $f(s)$ as $s\to\infty$. An adaptation of the KO-condition to nonmonotonic $f(s)$ was realized in \cite{DDGR}, to general nonlinearities $f(x,s)$ --- in \cite{L-G1}. A generalization of the KO-condition for higher order semilinear equations and inequalities was introduced in \cite{KSh1}. The uniqueness of large solutions was firstly proved by C.~Loewner and L.~Nirenberg in \cite{LoNi} for smooth domain $\Omega$ and $f(s)=s^p$, $p=\frac{N+2}{N-2}$. The first general result about the uniqueness was obtained by C.~Bandle and M.~Marcus \cite{BaMa} for smooth bounded domain $\Omega$ and $f(s)=s^p$, $p>1$. Asymptotic methods, introduced in \cite{BaMa}, was applied to different classes of nonlinearities $f(x,s)$ by many authors (see \cite{L-GMV} and references therein).

It appears clear that the uniqueness of the large solution mostly depends on the order of degeneracy of nonlinearity $f(x,s)$ on the boundary of $\Omega$. So, in \cite{MV4} the uniqueness was proved for $C^2$--smooth bounded domain when:
\begin{equation*}
f(x,s)\geqslant c_0d(x)^\alpha s^p\quad\forall\,x\in\Omega,\ \forall\,s\geqslant0,\ p>1,\ \alpha>0,\ c_0=const>0,
\end{equation*}
where $d(x)$ is from \eqref{ad3*}. In \cite{L-GMV} authors conjectured the uniqueness under the following condition:
\begin{equation*}
f(x,s)\geqslant c_0\exp\left(-\frac{c_1}{d(x)^\alpha}\right) s^p\quad\forall\,x\in\Omega,\ \forall\,s\geqslant0,\ 0<\alpha<1,\ c_1=const\geqslant0.
\end{equation*}
In the present paper, we prove the validity of this hypothesis. Moreover, we prove even more general results.
Namely, in bounded domain $\Omega\in\mathbb{R}^N$ with $C^2$--smooth boundary $\partial\Omega$ we consider the following semilinear equation:
\begin{equation}\label{1.17}
Lu + H(x)u^p :=-\sum_{i,j=1}^{N}\left(a_{ij}(x)u_{x_i}\right)_{x_j} +H(x)u^p=0\quad\text{in }\Omega,\ p>1,
\end{equation}
where $C^{1,\lambda}$--smooth functions $a_{ij}(\cdot)$ satisfy the ellipticity condition:
\begin{equation}\label{1.19}
d_1\mid\xi\mid^2\geqslant\sum_{i,j=1}^Na_{ij}(x)\xi_i\xi_j\geqslant d_0\mid\xi\mid^2 \ \ \forall\,\xi\in\mathbb{R}^N,\ \forall\,x\in\overline{\Omega},\ d_1<\infty,\ d_0>0,
\end{equation}
and the absorption potential $H(\cdot)$ satisfies
\begin{equation}\label{1.20}
H(x)\geqslant h_\omega(d(x))\quad\forall\,x\in\overline{\Omega},\quad h_\omega(s)=:\exp\left(-\frac{\omega(s)}{s}\right) \quad\forall\,s\in(0,\rho_0).
\end{equation}

\begin{theorem}\label{Th.3}
Let potential $H(x)$ satisfy estimate \eqref{1.20}, where nondecreasing continuous function $\omega(\cdot)$ satisfies the technical condition:
\begin{equation}\label{1.23}
s^{\gamma_1}\leqslant\omega(s)<\omega_0=const<\infty \quad\,s\in(0,\rho_0),\ 0<\gamma_1<1
\end{equation}
and the Dini condition
\begin{equation}\label{1.24}
\int_{0}^{c}\frac{\omega(s)}{s}ds<\infty;
\end{equation}
Then equation \eqref{1.17} admits only one large solution in the mentioned domain $\Omega$.
\end{theorem}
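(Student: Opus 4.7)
The plan is to combine the standard ``$(1-\epsilon)$'' reduction with a barrier built from a one-dimensional boundary-layer ODE. Let $u_1,u_2$ be two large solutions of \eqref{1.17}. Because $p>1$, for any $\epsilon\in(0,1)$ the function $v_\epsilon:=(1-\epsilon)u_1$ satisfies
\[
Lv_\epsilon + H(x)v_\epsilon^p = \bigl((1-\epsilon)^p-(1-\epsilon)\bigr)H(x)u_1^p \le 0,
\]
so it is a subsolution. If one can exhibit, for every $\epsilon>0$, a level $\beta_\epsilon>0$ such that $v_\epsilon\le u_2$ on $\{d(x)=\beta_\epsilon\}$, then the weak comparison principle for $L+H(x)(\cdot)^p$ applied on the bounded domain $\Omega_{\beta_\epsilon}:=\{d(x)>\beta_\epsilon\}$ (where $u_2$ is finite) gives $v_\epsilon\le u_2$ throughout $\Omega_{\beta_\epsilon}$, while $v_\epsilon\le u_2=+\infty$ trivially holds on the complementary strip. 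Sending $\epsilon\to 0$ and interchanging the roles of $u_1,u_2$ yields $u_1\equiv u_2$. Uniqueness is thereby reduced to the universal boundary asymptotic $u_1(x)/u_2(x)\to 1$ uniformly as $d(x)\to 0$.

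To establish this asymptotic, I would construct a radial profile $\Psi:(0,\rho_0)\to(0,\infty)$ solving an ODE of the form $\Psi''(s)=\kappa\, h_\omega(s)\Psi(s)^p$ with $\Psi(0^+)=+\infty$, for a suitable constant $\kappa$ dictated by \eqref{1.19}, and compare every large solution to $\Psi\circ d$. Heuristic balance in the ODE together with \eqref{1.23} yields the leading ansatz $\log\Psi(s)\sim\omega(s)/(s(p-1))$, i.e.\ $\Psi(s)\asymp h_\omega(s)^{-1/(p-1)}$ up to polynomial factors. Since $\partial\Omega$ is $C^2$, the distance function $d(\cdot)$ is $C^2$ on a one-sided neighbourhood $\{0<d<\rho_0\}$, and a direct computation gives
\[
L(\Psi\circ d)(x) = -\Psi''(d(x))\sum_{i,j}a_{ij}(x)d_{x_i}d_{x_j} - \Psi'(d(x))\,\tilde b(x),
\]
with $\tilde b$ bounded. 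Inserting the ODE for $\Psi$, using $H(x)\ge h_\omega(d(x))$, and renormalising $\Psi\mapsto\lambda\Psi$ (which, through the exponent $p$, rescales the absorption side by $\lambda^{p-1}$) shows that $\lambda\Psi\circ d$ is a local supersolution near $\partial\Omega$ for $\lambda>1$ and a local subsolution for $\lambda<1$, \emph{provided} the first-order remainder $\Psi'\tilde b$ stays subordinate to the principal balance $\Psi''\asymp h_\omega\Psi^p$. Comparing any large solution with these renormalised barriers on $\{\eta<d<\rho_1\}$ and sending $\eta\to0$ sandwiches $u$ between $(1-\delta)\Psi\circ d$ and $(1+\delta)\Psi\circ d$ for every $\delta>0$; applied to $u_1$ and $u_2$ this gives the required ratio limit.

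The hard part is controlling the first-order remainder against the principal balance. Since $h_\omega(s)$ vanishes super-exponentially at $s=0$, the profile $\Psi$ grows faster than any polynomial, and a direct computation of $\Psi'(s)/(h_\omega(s)\Psi(s)^p)$ shows that the relevant error along the tubular neighbourhood is controlled, to leading order, by $\omega(s)/s$. Its integrability is thus equivalent to the Dini condition \eqref{1.24}, which is exactly what allows the renormalised barriers $\lambda\Psi\circ d$ to remain local sub-/super-solutions with $\lambda$ arbitrarily close to $1$ and therefore forces the ratio $u/\Psi\circ d\to 1$. The lower bound $\omega(s)\ge s^{\gamma_1}$ in \eqref{1.23} is a nondegeneracy used to make $\log\Psi(s)$ grow fast enough to dominate the $C^2$-curvature contributions of $\partial\Omega$ (and the first-order terms of $L$), whereas $\omega<\omega_0$ keeps $\Psi$ finite on compact subsets of $(0,\rho_0)$, enabling the interior comparison step. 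Conceptually, \eqref{1.24} is the quantitative threshold that separates the regime of weak-enough degeneration (uniqueness, as proved here) from strong-enough degeneration (nonuniqueness and existence of very singular solutions), consistent with the duality announced in the abstract.
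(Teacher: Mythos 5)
Your approach is genuinely different from the paper's, and it has a gap that goes beyond incompleteness: the mechanism by which you claim the Dini condition enters does not hold up. The paper does not try to pin down the exact blow-up profile of large solutions at all. Instead, it establishes the \emph{strong barrier property} of Marcus--V\'eron (Definition~\ref{Def4.1}): at each boundary point $z$ one constructs a supersolution of \eqref{1.17} in $\Omega\cap B_r(z)$ that blows up on the spherical cap $\Omega\cap\partial B_r(z)$ while vanishing on $\partial\Omega\cap B_r(z)$, and uniqueness then follows from \cite{MV4}. The barrier is built by taking solutions with $u_j=K_j$ on $\partial B_R(0)\cap\Omega$ and $u_j=0$ on $\partial\Omega\cap B_R(0)$ and passing to the limit $K_j\to\infty$; the Dini condition \eqref{1.24} is used in an iterative energy cascade (Lemmas~\ref{Lem3.1}--\ref{Lem3.3} and the computation \eqref{3.34}--\eqref{3.63}) to show that the ``spreading distance'' $\sum_k\tau_{j-k}$ of the boundary singularity stays bounded below $\rho_0/2$, so the blow-up does not \emph{propagate tangentially} into $\partial\Omega\cap B_R(0)$ and $u_\infty$ indeed vanishes there. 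The Dini integral enters as a sum $\sum_k\omega(C_3e^{-j+k})$, bounded by $\int_0^\cdot\omega(s)/s\,ds$ in \eqref{3.59}; its finiteness is precisely what makes the barrier degenerate, not anything about ODE remainders.

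Your proposal, by contrast, tries to show $u_1/u_2\to1$ at $\partial\Omega$ via a one-dimensional Keller--Osserman profile $\Psi$ solving $\Psi''=\kappa h_\omega\Psi^p$, and you assert that the first-order remainder in $L(\Psi\circ d)$ has relative size $\sim\omega(s)/s$, whose integrability is \eqref{1.24}. Neither claim checks out. Integrating the ODE gives $\Psi(s)^{-(p-1)/2}\asymp\int_0^s h_\omega(r)^{1/2}dr$, so $|\Psi'(s)|/(h_\omega(s)\Psi(s)^p)\asymp h_\omega(s)^{-1/2}\int_0^s h_\omega(r)^{1/2}dr$, which by \eqref{3.30} is of order $s^2/\omega(s)\to0$ regardless of whether Dini holds --- the curvature/first-order terms are always subordinate, so they cannot be where \eqref{1.24} is used, and your heuristic would wrongly predict unconditional uniqueness. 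Moreover, for variable coefficients $a_{ij}$ the principal balance $\Psi''\sum a_{ij}d_{x_i}d_{x_j}\asymp h_\omega\Psi^p$ produces profiles for the ellipticity constants $d_0$ and $d_1$ that differ by the fixed multiplicative factor $(d_1/d_0)^{1/(p-1)}$, so the sandwich argument only yields a bounded ratio $u_1/u_2$, which is not enough for the $(1-\epsilon)$ reduction: that reduction requires $\liminf u_2/u_1\ge1$, i.e.\ the ratio limit must equal $1$. To make your route work for \eqref{1.17} one would need to localize the profile to each boundary point $z$ (freezing $A(z)=\sum a_{ij}(z)\nu_i\nu_j$) and control the tangential modulation of the blow-up across $\partial\Omega$, which is exactly the propagation phenomenon the paper attacks directly through the energy method --- and where the Dini condition genuinely enters.
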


%\begin{remark}\label{Rem10}
We conjecture that the Dini condition \eqref{1.24} is also a necessary condition for the uniqueness of the large solution.
%\end{remark}
As an indirect confirmation of the validity of this conjecture we consider the second main result of this paper about the necessity of the Dini condition \eqref{1.24} for the existence of a very singular (v.s.) solution to equations of structure \eqref{1.17}. Let us remind that a v.s. solution was first discovered as a nonnegative solution of the semilinear parabolic equation:
\begin{equation}\label{ad5*}
u_t-\Delta u+hu^p=0\quad\text{in } \mathbb{R}^1_+\times\mathbb{R}^N,\ 1<p<1+2N^{-1},\ h=const>0,
\end{equation}
satisfying the initial condition:
\begin{equation}\label{ad6*}
u(0,x)=\infty\,\delta(x),\quad \delta(x)\text{ is Dirac measure},
\end{equation}
in the following sense: $u(0,x)=0$ $\forall\,x:\mid x\mid\neq0$, and
\begin{equation*}
\lim_{t\to0}\int_{R^N}u_\infty(t,x)dx=\infty.
\end{equation*}
Moreover, v.s. solution $u_\infty(t,x)$ can be obtained as $\lim_{k\to\infty}u_k(t,x)$, where $u_k(t,x)$ is a solution of \eqref{ad5*} satisfying the initial condition $u_k(0,x)=k\delta(x)$ (see \cite{BPT,KP1,EK} and references therein). The next step was to study v.s. solutions to problem \eqref{ad5*}, \eqref{ad6*} with variable absorption potential $h=h(t,x)\geqslant0$, degenerating on initial hyperplane:
\begin{equation}\label{ad7*}
h(0,x)=0\quad\forall\,x\in\mathbb{R}^N.
\end{equation}
A new phenomenon was observed in \cite{MV5}: if $h=h(t)=\exp\left(-\omega t^{-1}\right)$, $\omega=const>0$, then $u_\infty(t,x)=\lim_{k\to\infty}u_k(t,x)$ is not a v.s. solution, but is a large solution, namely, $u_\infty(0,x)=\infty$ $\forall\,x\in\mathbb{R}^N$. In \cite{ShV2} it was found a sharp condition on the degeneracy of $h(t,x)$ which guarantees the existence of v.s. solution $u_\infty(t,x)$ with strong point singularity: $h(t)\geqslant\exp\left(-\omega(t)t^{-1}\right)$, where $\omega(\cdot)$ is a continuous nondecreasing function, satisfying the following Dini condition:
\begin{equation}\label{ad8*}
\int_0^cs^{-1}\omega(s)^\frac12<\infty.
\end{equation}
So far, we haven't known whether this condition is also a necessary condition for the existence of a v.s. solution to semilinear parabolic equations of the structure \eqref{ad5*}. But in the case of the semilinear elliptic equation \eqref{1.17} the role of the Dini-type condition \eqref{1.24} for the existence of the corresponding v.s. solution has been studied more fully by now.
Particularly, in \cite{ShV1} the following result about the sufficiency was proved. Let $\{u_k(x)\}$ be a sequence of solutions of equation \eqref{1.17}, \eqref{1.19}, \eqref{1.20}, satisfying the boundary condition:
\begin{equation}\label{ad9*}
u_k=k\delta_a(x),\quad\text{on }\partial\Omega,\ a\in\partial\Omega,\ k=1,2,...
\end{equation}
Let the potential $H(x)$ satisfy estimate \eqref{1.20}, where nonnegative function $\omega(s)$ satisfies all conditions of Theorem~\ref{Th.3}. Then $u_\infty(x)=\lim_{k\to\infty}u_k(x)$ is a v.s. solution of \eqref{1.17}, i.e. a solution with a strong (more strong than the corresponding Poisson kernel) boundary singularity at $a\in\partial\Omega$ and $\lim_{x\to y}u(x)=0$ $\forall\,y\in\partial\Omega\setminus\{a\}$.

%Second main result of present paper is the following statement about necessity of Dini condition for existence of v.s. solution.

Let us consider the following model problem:
\begin{equation}\label{1.1}
-\Delta u + h_\omega(\mid x'\mid )u^p=0\quad\text{in }\Omega:=\mathbb{R}^N_+=\{x\in\mathbb{R}^N, x_N>0\},
\end{equation}
\begin{equation}\label{1.2}
u\mid_{x_N=0}=K\delta_a(x),\qquad a\in L\subset\partial\Omega,\ K\in\,\mathbb{R}_+^1,%:=\{x\in\partial\Omega:\mid x'\mid =0\},
\end{equation}
where $N\geqslant2$, $p>1$, $x'=(x_2, ... , x_N)$, $L$ is a straight line $\{x=(x_1,0,...,0)\}$;
\begin{equation}\label{1.3}
h_\omega(s)=\exp\left(-\frac{\omega(s)}s\right)\quad \forall\,s\geqslant0.
\end{equation}
Here function $\omega(\cdot)$ satisfies the following conditions:
\begin{equation}\label{1.4}
\begin{split}
\text{(i)}&\quad \omega\in C(0,\infty)\text{ is a positive nondecreasing function},
\\\text{(ii)}&\quad
s\to\mu(s):=\frac{\omega(s)}s\text{ is monotonically decreasing  on }\mathbb{R}^1_+,
\\\text{(iii)}&\quad \lim_{s\to0}\mu(s)=\infty.
\end{split}
\end{equation}

Thus $h_\omega(\mid x'\mid)$ is the absorption potential of equation \eqref{1.1} which degenerates on the line $L$ from \eqref{1.2}. If
\begin{equation}\label{1.5}
P_0(x,z)=c_Nx_N\mid x-z\mid^{-N},\ c_N=\pi^{-\frac N2}\Gamma\left(\frac N2\right),
\end{equation}
is the Poisson kernel for $-\Delta$ in $\mathbb{R}^N_+$, then (see \cite{MV1}) inequality
\begin{equation}\label{1.6}
\int_{\{\mid x\mid<R,x_N>0\}}h_\omega(\mid x'\mid)P_0(x,a)^px_Ndx<\infty\quad
\forall\,R:0<R<\infty
\end{equation}
guarantees the existence of a unique solution of the problem \eqref{1.1}, \eqref{1.2} dominated by the supersolution $KP_0(x,a)$. Thus, if condition \eqref{1.6} holds, then for an arbitrary monotonically increasing sequence
\begin{equation}\label{1.7}
\left\{K_j\right\},\ K_j\to\infty\text{ as }j\to\infty
\end{equation}
there exists a monotonically nondecreasing (due to the comparison principle) sequence of solutions $u_j(x)$ of the problem \eqref{1.1}, \eqref{1.2} with $K=K_j$. Moreover, since $h_\omega(\mid x'\mid )$ is a positive function in $\overline{\Omega}\setminus L$, equation \eqref{1.1} possesses a maximal solution $U$ in $\Omega$, which is a large solution (see \cite{RRV}):
\begin{equation}\label{1.8}
\lim_{x_N\to0,\mid x\mid <M}U(x)=\infty\quad\forall\,M>0.
\end{equation}
Since $u_j(x)\leqslant U$ $\forall\,x\in\Omega$ $\forall\,j\in\mathbb{N}$, the mentioned sequence converges to some function $u_\infty$, which is a positive solution of \eqref{1.1}.

\begin{theorem}\label{Th.1}
Let the parameter $p$ in equation \eqref{1.1} additionally satisfy
\begin{equation}\label{1.9*}
1<p<p_0:=1+\frac2{N-1}
\end{equation}
and $\{u_j(x)\}$ be a sequence of solutions of problem \eqref{1.1}, \eqref{1.2}, corresponding to $K=K_j$ from \eqref{1.7}. Assume that functions $\omega(s)$, $\mu(s)$ satisfy conditions \eqref{1.4} and
\begin{equation}\label{1.9}
\limsup_{j\to\infty}\mu\left(2^{-j+1}\right)\mu\left(2^{-j}\right)^{-1}<1;\quad\omega(s)\to0\text{ as }s\to0.
\end{equation}
Assume also that Dini condition \eqref{1.24} is not satisfied, namely:
\begin{equation}\label{1.10}
\int_0^1s^{-1}\omega(s)ds=\infty.
\end{equation}
Then $u_\infty(x):=\lim_{j\to\infty}u_j(x)$ is a solution of \eqref{1.1} which satisfies
\begin{equation}\label{1.11}
u_\infty\mid_{\partial\Omega\setminus L}=0,\quad u_\infty\mid_{\partial\Omega\cap L}=\infty.
\end{equation}
\end{theorem}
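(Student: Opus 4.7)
The conclusion has two parts---vanishing on $\partial\Omega\setminus L$ and blow-up on $\partial\Omega\cap L$---which I address separately.

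\textbf{Vanishing on $\partial\Omega\setminus L$.} Fix $y_0\in\partial\Omega\setminus L$. Choose a small neighbourhood $W$ of $y_0$ in $\overline{\Omega}$ with $a\notin\overline{W}$ and $\inf_{W}|x'|\geq c>0$; then $h_\omega(|x'|)\geq m:=h_\omega(c)>0$ in $W$. Keller--Osserman interior estimates for $-\Delta v+mv^p=0$, applied in a slightly larger domain and combined with the upper bound $u_j\leq U$ from \eqref{1.8}, give a uniform-in-$j$ bound $u_j\leq M$ on $\partial W\cap\Omega$. I then let $v$ solve $-\Delta v+mv^p=0$ in $W$ with $v=M$ on $\partial W\cap\Omega$ and $v=0$ on $\partial W\cap\partial\Omega$; standard boundary regularity yields $v\in C(\overline{W})$ with $v(y_0)=0$, and the comparison principle gives $u_j\leq v$ in $W$ for all $j$, hence $u_\infty(x)\to 0$ as $x\to y_0$.

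\textbf{Blow-up on $L$.} This is the core of the proof and invokes all three hypotheses \eqref{1.9*}, \eqref{1.9}, and \eqref{1.10}. For each $y_0\in L$ I would construct a sequence of subsolutions $\underline u_j$ of \eqref{1.1} whose boundary traces are dominated in the sense of measures by $K_j\delta_a$ and which satisfy $\underline u_j(y_0)\to\infty$; the conclusion then follows from $u_j\geq\underline u_j$ by comparison. I would build $\underline u_j$ as a superposition of dyadic-scale layers $\underline u^{(k)}$ supported in cylindrical annuli $\{r_{k+1}\leq|x'|\leq r_k\}$, $r_k=2^{-k}$. Each layer is constructed by rescaling: the subcriticality $p<p_0$ ensures that condition \eqref{1.6} is quantitatively satisfied at scale $r_k$, so that a nontrivial layer profile exists and can be bounded below at $y_0$ by a quantity of order $\omega(r_k)$ (the absorption weight at scale $r_k$ is $h_\omega(r_k)=\exp(-\mu(r_k))$, and \eqref{1.9} gives the required dyadic regularity of $\mu$). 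Summing over $k\leq j$ and applying the dyadic reformulation $\sum_k\omega(2^{-k})=\infty$ of the Dini failure \eqref{1.10} then yields $\underline u_j(y_0)\geq c\sum_{k\leq j}\omega(2^{-k})\to\infty$.

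\textbf{Main obstacle.} The principal difficulty is the rigorous layer construction and its superposition. One must verify that (a) the sum of layers is itself a subsolution despite the nonlinear coupling through $u^p$, (b) the aggregated boundary trace remains below $K_j\delta_a$, and (c) layers on neighbouring scales do not cancel each other. Hypothesis \eqref{1.9} supplies the geometric decay of $\mu$ across dyadic scales needed to decouple the layers, and $\omega(s)\to 0$ ensures that each individual layer is small enough that the cross-terms produced by $u^p$ are subdominant. Uniformity in $y_0\in L$ follows from the $x_1$-translation invariance of the equation together with the monotonicity of the sequence $\{u_j\}$.
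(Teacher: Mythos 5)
Your outline for the vanishing part is essentially right in spirit, though the sentence invoking the upper bound $u_j\leq U$ together with Keller--Osserman to bound $u_j$ on $\partial W\cap\Omega$ is off: $U$ is a \emph{large} solution and blows up on $\partial\Omega$, so it gives no bound on the part of $\partial W\cap\Omega$ approaching $\partial\Omega$. The correct barrier near $y_0\in\partial\Omega\setminus L$ is the solution of $-\Delta V+mV^p=0$ in $B_\rho(y_0)\cap\Omega$ with $V=0$ on $B_\rho\cap\partial\Omega$ and $V=\infty$ on $\Omega\cap\partial B_\rho$, which dominates all $u_j$ (they vanish on $B_\rho\cap\partial\Omega$) and tends to $0$ at $y_0$. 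This is a repairable gloss, not the main problem.

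The blow-up argument, however, is based on the wrong mechanism, and I do not think the superposition you describe can be made to work. Two objections are decisive. First, you put your layers in the annuli $\{r_{k+1}\leq|x'|\leq r_k\}$; but any point $y_0\in L$ has $|x'|=0$, hence lies outside every annulus, so their sum cannot blow up at $y_0$. Second, for a superlinear absorption $a u^p$ with $p>1$ a sum of nonnegative subsolutions is generically a \emph{super}solution (since $(u+v)^p\geq u^p+v^p$), so the sum $\sum_k\underline u^{(k)}$ would not be a subsolution and the comparison $u_j\geq \sum_k\underline u^{(k)}$ could not be asserted; you flag this as the ``main obstacle,'' but the way the cross-terms are nonnegative works against you, not merely as a technicality to be checked. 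Third, the assertion that each layer ``can be bounded below at $y_0$ by a quantity of order $\omega(r_k)$'' is not a property any natural rescaled profile has, and I do not see where it would come from.

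What the paper actually does is a recursive cascade, not a superposition. One freezes the absorption to the constant $a_j=\exp(-\mu(r_j))$ inside the thin cylinder $\Omega_j=\{|x'|<r_j\}$, obtaining a subsolution $v_j$ with Dirac data at $x_N=0$; after rescaling and a specific normalization of $K_j$, all these reduce to one fixed profile $w$ in the unit half-cylinder. The cylinder asymptotics (Lemma~3.1 of \cite{MSh1}) give $v_j\sim A_j^{-1}\psi_1(r_j^{-1}x')\exp(-\sqrt{\lambda_1}x_1/r_j)$, and the key quantity is the \emph{propagation distance} $\tau_{j}$, defined as the $x_1$-distance over which $v_j$ decays from $A_j^{-1}$ to $A_{j-1}^{-1}$; a direct computation gives $\tau_{j}\sim r_j\bigl(\mu(r_j)-\mu(r_{j-1})\bigr)$, which by \eqref{1.9} is comparable to $\omega(r_j)$. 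The trace of $v_j$ at $x_1=\tau_j$ is then used as Dirichlet data for a new subsolution $v_{j-1}$ at the larger scale $r_{j-1}$ on $\{x_1>\tau_j\}$, and so on. After $k$ steps the singularity has moved $\sum_{i<k}\tau_{j-i}\sim\sum_{i<k}\omega(r_{j-i})\sim\int_{r_j}^{r_{j-k}}\omega(s)\,ds/s$ along $L$ while the amplitude has grown to $A_{j-k}^{-1}\to\infty$. The divergence \eqref{1.10} then lets the cascade reach an arbitrary $g>0$ while $\omega(s)\to0$ (via $\tau_i\lesssim\omega(r_i)\to0$) lets one land arbitrarily close to $g$. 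So $\omega(r_k)$ in the paper is a \emph{step length}, not a lower bound on a layer value; and \eqref{1.9} is used to show $\mu(r_j)-\mu(r_{j-1})\gtrsim\mu(r_j)$ (hence $\tau_j\gtrsim\omega(r_j)$), not to ``decouple'' anything. That recursive, traveling-singularity idea is the missing ingredient in your proposal.
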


\begin{remark}\label{Rem2**}
It is clear that the problem:
\begin{equation*}
-\Delta u + h_\omega(dist(x,L))u^p=0\quad\text{in }\Omega=\mathbb{R}^N_+,
\end{equation*}
\begin{equation*}
u\mid _{\partial\Omega=\{x_N=0\}}=K\delta_a(x),\quad a\in L,
\end{equation*}
where $L$ is an arbitrary straight line in $\partial\Omega=\mathbb{R}^{N-1}$, can be transformed into a problem of the form \eqref{1.1}, \eqref{1.2}, using corresponding linear orthonormal change of variables $(x_1,...,x_N)$. Therefore, the conclusion of Theorem~\ref{Th.1} is true for solutions $u_j(x)$ of the mentioned problem with $K=K_j$ too.
\end{remark}

\begin{remark}\label{Rem2*}
Let us consider additionally the following problem:
\begin{equation}\label{1.1*}
-\Delta u + h_\omega(x_N)u^p=0\quad\text{in }\mathbb{R}^N_+,
\end{equation}
\begin{equation}\label{1.2*}
u\mid_{x_N=0}=K_j\delta_a(x),\ a\in\mathbb{R}^{N-1},\ K_j\to\infty\text{ as }j\to\infty,
\end{equation}
where $h_\omega(s)$ is the same as in \eqref{1.3}. Since $h_\omega(x_N)\leqslant h_\omega(dist(x,L))$ and $h_\omega(x_N)$ degenerates on the whole hyperplane $\{x:x_N=0\}$, then due to Theorem~\ref{Th.1} and the comparison principle, solution $u_\infty(x):=\lim_{j\to\infty}u_j(x)$ satisfies: $u_\infty(x'',0)=\infty$ $\forall\,x''\in\mathbb{R}^{N-1}$. Moreover, in \cite{ShV1} it was proved that if $\omega(s)$ from \eqref{1.3} satisfies condition \eqref{1.24} instead of \eqref{1.10}, then
\begin{equation}\label{1.12}
u_\infty(x'',0)=0\quad\forall\,x''\in\mathbb{R}^{N-1}: \ x''\neq a.
\end{equation}
Thus, the Dini condition \eqref{1.24} is a necessary and sufficient condition for the existence of the very singular solution $u_\infty(x)$ with point singularity.
\end{remark}

\begin{remark}\label{Rem1}
Condition $\omega(s)\to0$ as $s\to0$ is technical for our proof of Theorem~\ref{Th.1} and can be omitted by simple arguments. Let $\omega(s)\geqslant\omega_0=const>0$ $\forall\,s>0$. Then we can find a continuous nondecreasing function $\tilde{\omega}(s)\geqslant0$:
\begin{equation*}
\tilde{\omega}(s)>0\ \forall\,s>0,\ \tilde{\omega}(s)\to0\text{ as }s\to0, \tilde{\omega}(s)\leqslant\omega_0\ \forall\,s>0,
\end{equation*}
which satisfies condition \eqref{1.10}. Let now $\tilde{u}_j(x)$ be a sequence of solutions to problem \eqref{1.1*}, \eqref{1.2*} with absorption potential $h_{\tilde{\omega}}(x_N):=\exp\left(-\frac{\tilde{\omega}(x_N)}{x_N}\right)$ instead of $h_{\omega}(x_N)$. Then due to Th.\ref{Th.1} \ $\tilde{u}_\infty(x'',0)=\infty$ for an arbitrary $x''\in\mathbb{R}^{N-1}$. If now $u_j^{(0)}(x)$ be a sequence of solutions of problem \eqref{1.1*}, \eqref{1.2*} with $h_{\omega_0}(x_N):=\exp\left(-\frac{\omega_0}{x_N}\right)$ instead of $h_{\omega}(x_N)$, then by comparison principle
\begin{equation*}
u_j(x)\geqslant u^{(0)}_j(x)\geqslant\tilde{u}_j(x)\quad\forall\,j\in\mathbb{N},\ \forall\,x\in\overline{\Omega}.
\end{equation*}
Therefore $\infty=\tilde{u}_\infty(x'',0)\leqslant u_\infty^{(0)}(x'',0)\leqslant u_\infty(x'',0)$ and, as consequence, $u_\infty^{(0)}(x'',0)=u_\infty(x'',0)=\infty$. Notice that this last property of propagation of the strong point singularity of solution $u_\infty^{(0)}(x)$ along the whole boundary of the domain, when $\omega=\omega_0>0$, was firstly discovered by M.~Marcus, L.~Veron \cite{MV2}.
\end{remark}

The paper is organized as follows. Section 2 is devoted to the proof of the main auxiliary Theorem~\ref{Th.2}, where our variant of the local energy estimate method is applied for the study of the asymptotic behavior of solutions to semilinear elliptic equations of diffusion-absorption type near the singularity set. In section~3 the technique, elaborated in section~2, is adapted to the proof of Theorem~\ref{Th.3}. Finally, in section~4 Theorem~\ref{Th.1} about the necessity of the Dini condition is proved.

\section{Local energy estimates near the boundary singularity set}

Let $\Omega\subset\mathbb{R}^{N}_+$ be a bounded domain with $C^2$--boundary $\partial\Omega$, such that
\begin{equation}\label{1.14}
\Gamma_{\overline{R}+\rho_0}:=\{(x'',0):\mid x''\mid\leqslant\overline{R}+\rho_0\}\subset\partial\Omega,\quad \Gamma_{\overline{R}+\rho_0}\times(0,\rho_0)\subset\Omega,
\end{equation}
where $\overline{R}>0$, $\rho_0>0$.
Let $G_i$, $i=1,2,...,l$, be bounded subdomains of hyperplane $\{x_N=0\}$ with $C^2$--boundaries $\partial G_i$, such that
\begin{equation}\label{1.15}
G_i\subset\{\mid x''\mid <\overline{R}\}\quad\forall\,i\leqslant l,
\end{equation}
\begin{equation}\label{1.16}
dist(G_i,G_j):=\inf_{x\in G_i,y\in G_j}\mid x-y\mid >\rho_0\quad\forall\,i\neq j,\ \rho_0\text{ is from \eqref{1.14}}.
\end{equation}
In this domain $\Omega$ we consider the following boundary Dirichlet problem:
\begin{equation}\label{1.18}
u\mid _{\bar{G}_i}=K^{(i)}=const>0,\
i=1,2,...,l; \quad
u=0\text{ on }\partial\Omega\setminus\biggr\{\bigcup_{i=1}^l\bar{G}_i\biggr\},
\end{equation}
for equation \eqref{1.17}.
Introduce now $l$ sequences
\begin{equation}\label{1.21}
\{K_j^{(i)}\},\ i\leqslant l,\ j=1,2,...:\ K_j^{(i)}\to\infty\text{ as }j\to\infty \quad\forall\,i\leqslant l,
\end{equation}
and let $\{u_j\}$, $j=1,2,...$, be an infinite sequence of solutions of equation \eqref{1.17} satisfying the boundary condition
\begin{equation}\label{1.22}
u_j\mid _{\bar{G}_i}=K_j^{(i)},\quad
u_j=0\text{ on }\partial\Omega\setminus\left\{\cup_{i=1}^l\bar{G}_i\right\}.
\end{equation}

\begin{theorem}\label{Th.2}
Let functions $h_\omega(\cdot)$ and $H(\cdot)$ satisfy relation \eqref{1.20} and let $\omega$ from \eqref{1.20} be a nondecreasing continuous function satisfying technical condition \eqref{1.23} and Dini condition \eqref{1.24}. If $u_j$ is a solution of  problem \eqref{1.17}, \eqref{1.22}, then $u_\infty:=\lim_{j\to\infty}u_j$ is a solution of equation \eqref{1.17}, satisfying the boundary conditions
\begin{equation}\label{1.25}
\lim_{x\to y}u_\infty(x)=0\quad\forall\,y\in \partial\Omega\setminus
\big\{\cup_{i\leqslant l}\overline{G}_i\big\},\quad \lim_{x\to y}u_\infty(x)=\infty\quad\forall\,y\in
\cup_{i\leqslant l}\overline{G}_i
\end{equation}
\end{theorem}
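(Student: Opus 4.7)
The plan is to combine two simple monotonicity arguments with a quantitative local energy argument that exploits the Dini condition \eqref{1.24}.

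First I would set up the limit and the easy boundary behaviour. Because the $K_j^{(i)}$ are increasing and the boundary data of $u_j$ and $u_{j+1}$ are ordered on every part of $\partial\Omega$, the comparison principle (applicable since $H\geqslant 0$ and $r\mapsto r^p$ is increasing) yields $u_j\leqslant u_{j+1}$ throughout $\Omega$. Since $h_\omega$ is strictly positive in $\overline{\Omega}\setminus\bigcup_i\overline{G}_i$, the equation admits a maximal solution $U$ in the spirit of \cite{RRV}; $U$ dominates every $u_j$ because $u_j=0$ on $\partial\Omega\setminus\bigcup_i\overline{G}_i$ and $u_j$ can be compared to $U$ on each $\overline{G}_i$ for $j$ large (strictly $U=\infty$ there). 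So $u_\infty:=\lim_j u_j$ is well defined, and standard interior elliptic estimates promote $u_\infty$ to a classical solution of \eqref{1.17*} in $\Omega$. The behaviour on $\bigcup_i\overline{G}_i$ is immediate: for any $y\in\overline{G}_i$ and any fixed $j$, $\liminf_{x\to y}u_\infty(x)\geqslant\liminf_{x\to y}u_j(x)=K_j^{(i)}$, so sending $j\to\infty$ gives the second half of \eqref{1.25}.

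The substantive issue is the first half of \eqref{1.25}: vanishing at $y_0\in\partial\Omega\setminus\bigcup_i\overline{G}_i$. I would fix such $y_0$ together with a half-ball neighbourhood $B_{\rho_0}(y_0)\cap\Omega\subset\Omega\setminus\bigcup_i\overline{G}_i$ and work in boundary-fitted coordinates so that $d(x)$ is comparable to the transversal variable. The goal is a bound on $u_\infty$ that depends only on local quantities (independent of $j$) and tends to $0$ with $d(x)$. I would test the equation against $u\,\xi^{2}$ with a cutoff $\xi$ supported in a dyadic cap-like set $C_k:=\{x\in\Omega:2^{-k-1}\rho_0<d(x)<2^{-k+1}\rho_0\}\cap B_{r}(y_0)$, integrate by parts, and invoke \eqref{1.19} to produce
\begin{equation*}
d_0\int|\nabla u|^2\xi^{2}\,dx+\int H(x) u^{p+1}\xi^{2}\,dx\leqslant C\int u^{2}|\nabla\xi|^2\,dx.
\end{equation*}
The crucial point is that on $C_k$ we have $H(x)\geqslant\exp(-2^{k+1}\omega(2^{-k+1}\rho_0)/\rho_0)$, which by \eqref{1.23} is merely small but strictly positive, and by \eqref{1.24} the sum $\sum_k 2^{-k}\cdot 2^{k}\omega(2^{-k}\rho_0)/\rho_0$ controlling the aggregate logarithmic loss along the dyadic chain is \emph{finite}. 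Coupling this lower bound on $H$ with the energy inequality through Young's inequality and a Moser/De Giorgi iteration on each dyadic layer yields a recursive estimate $E_{k+1}\leqslant\theta_k E_k+A_k$, where the $\theta_k,A_k$ are computable from $\omega$ and the $K_j^{(i)}$ contribute only to the initial datum $E_0$ (which is independent of $j$ once the test function vanishes on $\bigcup_i\overline{G}_i$). Summing this recursion under \eqref{1.24} gives an $L^{p+1}_{\text{loc}}$-and-energy bound that not only is uniform in $j$ but actually tends to zero as $k\to\infty$, i.e.\ as one approaches $y_0$. A standard Moser sup-estimate then upgrades this to $\sup_{B_{2^{-k}\rho_0}(y_0)\cap\Omega}u_j\to 0$ uniformly in $j$, which is exactly the first half of \eqref{1.25}.

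The main obstacle is the careful accounting on the dyadic chain in the third step: the absorption $H(x)$ is exponentially small near the singular set, so each individual layer gives only a weak decay, and one must show that the multiplicative losses over infinitely many layers accumulate to a finite quantity precisely when \eqref{1.24} holds. This is where the technical condition \eqref{1.23} is needed, to guarantee that $\omega(s)/s$ does not grow catastrophically and that the logarithmic factors produced by inserting $H\geqslant h_\omega$ into the iteration are summable against $ds/s$. Once this summation step goes through, monotonicity in $j$ and interior regularity finish the argument.
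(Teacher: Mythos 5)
Your preliminary observations (monotonicity of $\{u_j\}$ via the comparison principle, domination by a maximal solution, interior regularity of the limit, and the blow--up on $\cup_i\overline G_i$) are correct and match the paper. But the substantive part --- the dyadic iteration in the normal variable $d(x)$ near $y_0\in\partial\Omega\setminus\cup_i\overline G_i$ --- does not close, and the paper's actual mechanism is structurally different.

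The absorption $H(x)\geqslant h_\omega(d(x))=\exp(-\omega(d(x))/d(x))$ depends only on the normal distance, so it is exactly as weak near $y_0$ as it is near $\overline G_i$; a purely normal iteration cannot see the difference except through the initial datum. At the layer $C_k$ of normal width $\sim 2^{-k}\rho_0$ the absorption coefficient is $h_\omega(2^{-k}\rho_0)=\exp\bigl(-2^{k}\omega(2^{-k}\rho_0)/\rho_0\bigr)$. When you convert the Caccioppoli inequality into a recursion for $E_k$, H\"older interpolation between $L^2$ and the weighted $L^{p+1}$ norm brings in a factor $h_\omega(2^{-k}\rho_0)^{-2/(p+1)}$, whose logarithm is $\sim 2^{k}\omega(2^{-k}\rho_0)$; since $\omega(s)\geqslant s^{\gamma_1}$ with $\gamma_1<1$ by \eqref{1.23}, this diverges super-exponentially in $k$. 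The quantity you claim is ``the aggregate logarithmic loss'', $\sum_k 2^{-k}\cdot 2^{k}\omega(2^{-k}\rho_0)/\rho_0\approx\int_0^c\omega(s)\,s^{-1}ds$, is what the Dini condition controls, but it is not what accumulates in a normal-direction Moser chain: the actual per-layer loss is $\omega(2^{-k}\rho_0)/(2^{-k}\rho_0)$, not $\omega(2^{-k}\rho_0)$, and its sum diverges. Consequently $E_k$ does not tend to zero, and the proposed recursion does not produce the desired boundary decay.

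The paper's key lemma (Lemma~\ref{Lem3.3}) is a \emph{tangential} propagation estimate: fixing a collar of normal thickness $s$ and letting $\tau$ denote the tangential distance from $\cup_i G_i$, the energy $J_j(s,\tau)$ satisfies a first-order ODE inequality in $\tau$ whose solution decays like $\exp(-\tau/(cs))$, up to an additive error $C h_\omega(s)^{-2/(p-1)-\nu}$. The iteration then shrinks $s$ step by step so as to drop the energy threshold from $\overline K_{j}$ to $\overline K_{j-1}$, etc.; each step costs a tangential shift $\tau_{j-k}\lesssim\omega(C_3 e^{-(j-k)})$, and the Dini condition \eqref{1.24} enters precisely as the statement that $\sum_k\tau_{j-k}\lesssim\int_0^{\cdot}\omega(r)r^{-1}dr$ is finite (indeed tends to $0$). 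This is the missing idea in your proposal: the Dini integral must be used to sum tangential increments, not normal-direction logarithmic losses; a normal dyadic scheme near $y_0$ cannot replicate it.
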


%The proof of Theorem~\ref{Th.2} has in some sense universal character. Particularly, it can be adapted for the obtaining of following result about uniqueness of large solution to equation \eqref{1.17}.

%\begin{theorem}\label{Th.3}
%Let us consider equation \eqref{1.17} in arbitrary bounded domain $\Omega\subset\mathbb{R}^N$ with $C^2$ boundary $\partial\Omega$ and let potential $H(x)$ satisfies estimate \eqref{1.20}, where function $\omega(s)$ satisfies condition \eqref{1.23} and Dini condition \eqref{1.24}. Then equation \eqref{1.17} admit only one large solution in the domain under consideration.
%\end{theorem}

\begin{proof}

Let us introduce the following families of subdomains of $\Omega$ from \eqref{1.14}--\eqref{1.16}:
\begin{equation}\label{3.1}
\begin{split}
  & \Omega_s:= \{x\in\Omega:d(x)>s\}\quad\forall\,s\in\mathbb{R}_+^1, \\
&   \Omega^s:= \{x\in\Omega:0<d(x)<s\}\quad\forall\,s\in\mathbb{R}_+^1
\end{split}
\end{equation}
Due to the smoothness of $\partial\Omega$ there exists $\bar{s}>0$, such that $\partial\Omega^s\cap\Omega=\partial\Omega_s$ is $C^2$--smooth for any $s:0<s<\bar{s}$. Moreover, we can assume that
\begin{equation}\label{3.2}
d(x)=x_N\quad\forall\,x\in\Gamma_{\bar{R}+\rho_0}\times(0,\rho_0).
\end{equation}
Let $u$ be a nonnegative solution of equation \eqref{1.17} in $\Omega$. Introduce the following energy function, connected with $u$:
\begin{equation}\label{3.3}
I(s):=\int_{\Omega_s}\left(\mid \nabla_xu\mid ^2+h_\omega (d(x))u^{p+1}\right)dx,\quad s>0.
\end{equation}

\begin{lemma}\label{Lem3.1}
The function $I(\cdot)$ from \eqref{3.3} satisfies the estimate:
\begin{equation}\label{3.4}
I(s)\leqslant d_3\left[\int_0^sh_\omega(r)^{\frac2{p+3}}dr\right]^{-\frac{p+3}{p-1}}, \quad\forall\,s:0<s<\bar{s},
\end{equation}
where constant $d_3<\infty$ does not depend on $u$.
\end{lemma}
\begin{proof}
Multiplying equation \eqref{1.17} by $u$ and integrating it over $\Omega_s$, $s:0<s<\bar{s}$, we obtain:
\begin{multline}\label{3.5}
\int_{\Omega_s}\left(\sum_{i,j=1}^Na_{ij}(x)u_{x_i}u_{x_j} +H(x)u^{p+1}\right)dx = \int_{\partial\Omega_s} \sum_{i,j=1}^Na_{ij}(x)u_{x_i}u\nu_jd\sigma\leqslant\\
\leqslant\left(\int_{\partial\Omega_s} \sum_{i,j=1}^Na_{ij}(x)u_{x_i}u_{x_j}d\sigma\right)^\frac12 \left(\int_{\partial\Omega_s} \sum_{i,j=1}^Na_{ij}(x)\nu_i\nu_ju^2d\sigma\right)^\frac12,
\end{multline}
where $\nu(x)=(\nu_1,...,\nu_N)$ is an outward normal unit vector to $\partial\Omega$.
By \eqref{1.19}, \eqref{1.20} and H\"{o}lder's inequality, we have:
\begin{equation*}
\left(\int_{\partial\Omega_s} \sum_{i,j=1}^Na_{ij}(x)\nu_i\nu_ju^2d\sigma\right)^\frac12\leqslant c(meas\,\partial\Omega_s)^\frac{q-1}{2(q+1)}h_\omega(s)^{-\frac{1}{q+1}} \left(\int_{\partial\Omega_s} h_\omega(s)u^{q+1}d\sigma\right)^\frac1{q+1}.
\end{equation*}
Substituting this estimate into \eqref{3.5} and using Young's inequality we obtain:
\begin{equation}\label{3.6}
I(s)\leqslant c_1h_\omega(s)^{-\frac{1}{p+1}}\left(\int_{\partial\Omega_s}\left(\mid \nabla_xu\mid ^2+h_\omega \left(d(x)\right)u^{p+1}\right)d\sigma\right)^{1-\frac{p-1}{2(p+1)}}.
\end{equation}
It is easy to see that
\begin{equation*}
\frac{dI(s)}{ds}= -\int_{\partial\Omega_s}\left(\mid \nabla_xu\mid ^2+h_\omega \left(d(x)\right)u^{p+1}\right)d\sigma.
\end{equation*}
Substituting this relation into \eqref{3.6} we derive the following differential inequality:
\begin{equation*}
I(s)\leqslant c_2h_\omega(s)^{-\frac{1}{p+1}}\left(-I'(s)\right)^{1-\frac{p-1}{2(p+1)}}.
\end{equation*}
Solving this inequality we obtain \eqref{3.4}.
\end{proof}

Now we derive the global upper a priori estimates for solutions $u_j$ of the problem \eqref{1.17}, \eqref{1.22} when $j\to\infty$. For an arbitrary small $\delta>0$ we introduce $C^1$--smooth function $\xi_\delta(x'')$ with $supp\ \xi_\delta\subset\{x''\in\mathbb{R}^{N-1}:\mid x''\mid \leqslant\overline{R}+\delta\}$, $\delta<2^{-1}\rho_0$, such that:
\begin{align}
&\xi_\delta(x'')=1\text{ if }x''\in\overline{G}_i\quad\forall\,i\leqslant l,\label{3.7}
\\&\xi_\delta(x'')=0\text{ if }dist(x'',\overline{G}_i):=\min_{y\in\overline{G}_i}\mid x''-y\mid \geqslant\delta \quad\forall\,i\leqslant l,\label{3.8}
\\&0\leqslant\xi_\delta(x'')\leqslant1\quad\forall\,x''\in\mathbb{R}^{N-1}\setminus \cup_{i=1}^l\overline{G}_i:\min_{i\leqslant l} dist(x'',\overline{G}_i)<\delta. \label{3.9}
\end{align}
It is clear that
\begin{equation}\label{3.10}
\mid \nabla\xi_\delta\mid \leqslant c\delta^{-1}\quad\forall\,\delta:0<\delta<2^{-1}\rho_0,
\end{equation}
where $c<\infty$ does not depend on $\delta$.
Let $u_{j,\delta}$, $j=1,2,...$, be a solution of equation \eqref{1.17} satisfying the regularized boundary condition:
\begin{equation}\label{3.11}
u_{j,\delta}=K_j\xi_\delta\text{ on }\partial\Omega,\quad K_j:=\max_{i\leqslant l}K_j^{(i)}.
\end{equation}
By the comparison principle we have:
\begin{equation}\label{3.12}
u_{j,\delta}(x)\geqslant u_j(x)\quad\forall\,x\in\overline{\Omega},\ \forall\,j\in\mathbb{N},\ \forall\,\delta:0<\delta<2^{-1}\rho_0.
\end{equation}
Therefore, to prove Theorem~\ref{Th.2} it is sufficient to investigate and estimate from above the solution $u_{j,\delta}$ with an arbitrary small $\delta>0$. For the sake of simplicity of the notations we omit $\delta$ in $u_{j,\delta}$ and\ denote $u_{j,\delta}$ by $u_j$.

\begin{lemma}\label{Lem3.2}
Solution $u_j=u_{j,\delta}$ of problem \eqref{1.17}, \eqref{3.11} satisfies the following estimate:
\begin{equation}\label{3.13}
\int_{\Omega}\left(\mid \nabla u_j\mid ^2+h_\omega \left(d(x)\right)u_j^{p+1}\right)dx\leqslant \overline{K}_j:=\bar{c}\big(K_j^{p+1}+\delta^{-1}K_j^2\big),
\end{equation}
where constant $\bar{c}<\infty$ does not depend on $j\in\mathbb{N}$, $\delta\in(0,2^{-1}\rho_0)$.
\end{lemma}
\begin{proof}
Let us introduce $C^2$--cut--off function $\zeta=\zeta_\delta(s)$, such that $\zeta_\delta(s)=1$ if $s\leqslant\delta$, $\zeta_\delta(s)=0$ if $s>2\delta$, $0\leqslant\zeta_\delta(s)\leqslant1$, $\mid \nabla\zeta_\delta\mid <c\delta^{-1}$. Multiplying \eqref{1.17} by
\begin{equation}\label{3.13*}
v_j(x)=u_j(x)-K_j\xi_\delta(x'')\zeta_\delta(d(x)),\quad K_j\text{ is from \eqref{3.11}},
\end{equation}
and integrating it over $\Omega$, due to $v_j=0$ on $\partial\Omega$ we obtain:
\begin{equation}\label{3.14}
\begin{split}
&\int_{\Omega}\left(\sum_{i,k=1}^Na_{ik}(x)u_{jx_i}u_{jx_k} +H(x)u_j^{p+1}\right)dx = \\=&\int \sum_{i,k=1}^Na_{ik}(x)u_{jx_i}\left(\xi_\delta(x'')\zeta_\delta(d(x))\right)_{x_k}K_jdx+\\
+&\int_{\Omega}K_jH(x)u_j^p\xi_\delta(x'')\zeta_\delta(d(x))dx:= A_1+A_2.
\end{split}
\end{equation}
By Young's inequality and properties \eqref{3.7}--\eqref{3.10} we get:
\begin{equation}\label{3.15}
\begin{split}
  & \mid A_1\mid \leqslant2^{-1} \int_{\Omega}\sum_{i,k=1}^Na_{ik}(x)u_{jx_i}u_{jx_k}dx+c\delta^{-1}K_j^2, \\
&   \mid A_2\mid \leqslant2^{-1}\int_{\Omega}H(x)u_j^{p+1}dx+ c'K_j^{p+1},
\end{split}
\end{equation}
where constants $c,c'<\infty$ do not depend on $\delta$, $j$. By \eqref{3.15} and \eqref{3.14} we have:
\begin{equation}\label{3.16}
\int_{\Omega}\left(\sum_{i,k=1}^Na_{ik}(x)u_{jx_i}u_{jx_k} +H(x)u_j^{p+1}\right)dx\leqslant\bar{\bar{c}} (K_j^{p+1}+\delta^{-1}K_j^2),\quad \bar{\bar{c}}=\max(c,c'),
\end{equation}
which yields the estimate \eqref{3.13} due to properties \eqref{1.20}, \eqref{1.19}.
\end{proof}

Introduce now the following family of subdomains of the domain $\Omega^s$ with an arbitrary $s\in(0,\rho_0)$:
\begin{equation}\label{3.14*}
\Omega^s(\tau):=\Omega^s\setminus\big\{x=(x'',x^N)\in\Omega^s:r(x''):= \min_{i\leqslant l}dist(x'',G_i)<\tau\big\} \quad\forall\,\tau\in\big(0,\frac{\rho_0}{2}\big),
\end{equation}
where $\rho_0>0$ is from \eqref{3.2}. Introduce also another family of energy functions for the solution $u_j=u_{j,\delta}$ under consideration:
\begin{equation}\label{3.15*}
J_j(s,\tau):=\int_{\Omega^{2s}(\tau)}\left(\mid \nabla u_j\mid ^2+h_\omega(d(x))u_j^{p+1}\right)\zeta_s(d(x))dx,
\end{equation}
where $\zeta_s(\cdot)$ is a function from \eqref{3.13*}: $\zeta_s(d)=1$ if $d\leqslant s$, $\zeta_s(d)=0$ if $d>2s$, $0\leqslant\zeta_s(d)\leqslant1$, $\mid \nabla \zeta_s\mid \leqslant cs^{-1}$.

\begin{lemma}\label{Lem3.3}
The energy function $J_j(s,\tau)$ from \eqref{3.15*} satisfies the following differential inequality
\begin{multline}\label{3.16*}
J_j(s,\tau)\leqslant cs\left(-\frac{d}{d\tau}J_j(s,\tau)\right)+Ch_\omega(s)^{-\frac2{p-1}-\nu}\quad \forall\,\tau\in\big(\delta,\frac{\rho_0}{2}\big),\ \forall\,j\in\mathbb{N},
\\ \forall\,s\in\big(0,\frac{\rho_0}{2}\big),\ \forall\,\nu>0,\ C=C(\nu)\to\infty\text{ as }\nu\to0,
\end{multline}
where constants $c,C$ do not depend on $j$.
\end{lemma}
\begin{proof}
We multiply equation \eqref{1.17} for the solution $u_j(x)$ by $u_j(x)\zeta_s(d(x))$ and integrate it over $\Omega^{2s}(\tau)$, $\tau>\delta$. As a result we obtain the following relation:
\begin{equation}\label{3.17*}
\begin{split}
\overline{J}_j(s,\tau):=&\int_{\Omega^{2s}(\tau)} \left(\sum_{i,k=1}^Na_{ik}(x)u_{jx_i}u_{jx_k} +H(x)u_j^{p+1}\right)\zeta_s(d(x))dx=\\=&R_1+R_2:=\int_{\Gamma^{2s}(\tau)} \sum_{i,k=1}^Na_{ik}(x)u_{jx_i}u_{j}\nu_k(x)\zeta_s(d(x))d\sigma- \\-&\int_{\Omega^{2s}(\tau)\setminus\Omega^s(\tau)} \sum_{i,k=1}^Na_{ik}(x)u_{jx_i}\zeta_s(d(x))_{x_k}u_jdx,
\end{split}
\end{equation}
where $\Gamma^{2s}(\tau):=\bigcup_{i\leqslant l}\Gamma_i^{2s}(\tau)$, $\Gamma_i^{2s}(\tau):=\{x=(x'',x_N):x_N<2s,dist(x'',G_i)=\tau\}$. Notice that due to \eqref{3.2} and \eqref{1.15}, \eqref{1.16} we have: $\Gamma_i^{2s}(\tau)\cap\Gamma_j^{2s}(\tau)=\emptyset$ $\forall\,i\neq j$, $\forall\,\tau<2^{-1}\rho_0$. Now using H\"{o}lder's inequality we estimate $R_1$ from above:
\begin{equation}\label{3.18}
\begin{split}
\mid R_1\mid \leqslant & \left(\int_{\Gamma^{2s}(\tau)}\sum_{i,k=1}^Na_{ik}u_{jx_i}u_{jx_k}\zeta_s(d(x))d\sigma\right)^\frac12
\times \\ \times & \left (\int_{\Gamma^{2s}(\tau)}\sum_{i,k=1}^Na_{ik}\nu_i\nu_ku_j^2\zeta_s(d(x))d\sigma\right)^\frac12= (R_1^{(1)})^\frac12 (R_1^{(2)})^\frac12.
\end{split}
\end{equation}
By \eqref{1.19} we estimate $R_1^{(2)}$:
\begin{equation*}
\begin{split}
R_1^{(2)}\leqslant &d_1\int_{\Gamma^{2s}(\tau)}u_j^2\zeta_s(d(x))d\sigma= d_1\left(\int_{\Gamma^{2s}(\tau)\setminus\Gamma^s(\tau)}u_j^2\zeta_s(d(x))d\sigma+
\int_{\Gamma^{s}(\tau)}u_j^2d\sigma\right)=\\=&d_1 \left(R_{1,1}^{(2)}+R_{1,2}^{(2)}\right).
\end{split}
\end{equation*}
Since $u_j(x'',0)=0$ $\forall\,x''\in\Gamma^s(\tau):\delta<\tau<\rho_0$, we derive by the Poincare's inequality:
\begin{multline}\label{3.19}
R_{1,2}^{(2)}=\int_{\Gamma^s(\tau)}u_j^2d\sigma\leqslant d_2s^2\int_{\Gamma^s(\tau)}\mid \frac{\partial u_j}{\partial x_N}\mid ^2d\sigma\leqslant d_2s^2\int_{\Gamma^s(\tau)}\mid \nabla u_j\mid ^2d\sigma \\ \forall\,\tau:\delta<\tau<\frac{\rho_0}{2}.
\end{multline}
We estimate the term $R_{1,1}^{(2)}$ by the standard trace interpolation inequality (see e.g. \cite{GT}):
\begin{equation}\label{3.20}
\begin{split}
  &\int_{\Gamma_{i,x_N}(\tau)}u_j(x'',x^N)^2d\sigma''\leqslant c_1\left(\int_{\tau<\mid x''\mid <\rho_0}\mid \nabla_{x''} u_j(x'',x_N)\mid ^2dx''\right)^\frac12 \times \\&\times\left(\int_{\tau<\mid x''\mid <\rho}u_j(x'',x_N)^2dx''\right)^\frac12 +c_2\int_{\tau<\mid x''\mid <\rho_0}u_j(x'',x_N)^2dx'' \\ &
\forall\,\tau:\delta<\tau<\frac{\rho_0}{2},\ \forall\,x_N\in(s,2s),\ \forall\,i\leqslant l,
\end{split}
\end{equation}
where $\Gamma_{i,x_N}(\tau):=\{x=(x'',x_N):dist(x'',G_i)=\tau,x_N=const\}$, constants $c_1,c_2$ do not depend on $\tau,s$. Integrating the last inequality with respect to $x_N$ over the interval $(s,2s)$ and summing obtained inequalities from $i=1$ up to $i=l$, we obtain after simple computations:
\begin{equation}\label{3.21}
\begin{split}
R_{1,1}^{(2)}\leqslant & c_1\left(\int_{\Omega^{2s}(\tau)\setminus\Omega^s(\tau)}\mid \nabla_{x''} u_j\mid ^2dx\right)^\frac12 \left(\int_{\Omega^{2s}(\tau)\setminus\Omega^s(\tau)}u_j(x)^2dx\right)^\frac12+
\\ +&c_2\int_{\Omega^{2s}(\tau)\setminus\Omega^s(\tau)}u_j(x)^2dx=: c_1(R_{1,1,1}^{(2)})^\frac12(R_{1,1,2}^{(2)})^\frac12+ c_2R_{1,1,2}^{(2)} \\ &\forall\,\tau\in(\delta,2^{-1}\rho_0),\forall\,s\in(0,2^{-1}\rho_0).
\end{split}
\end{equation}
By H\"{o}lder's inequality we get:
\begin{equation}\label{3.22}
R_{1,1,2}^{(2)}\leqslant c_3s^{\frac{p-1}{p+1}} h_\omega(s)^{-\frac{2}{p+1}}\left(\int_{\Omega^{2s}(\tau)\setminus\Omega^s(\tau)} h_\omega(d(x))u_j(x)^{p+1}dx\right)^{\frac{2}{p+1}}.
\end{equation}
It follows from \eqref{3.21}, \eqref{3.22} that
\begin{equation}\label{3.23}
\begin{split}
  R_{1,1}^{(2)}\leqslant &c_4s^{\frac{p-1}{p+1}} h_\omega(s)^{-\frac{2}{p+1}}\left(\int_{\Omega^{2s}(\tau)\setminus\Omega^s(\tau)} h_\omega(d(x))u_j(x)^{p+1}dx\right)^{\frac{2}{p+1}}+\\
  &+c_5s^{\frac{p-1}{2(p+1)}} h_\omega(s)^{-\frac{1}{p+1}}\left(\int_{\Omega^{2s}(\tau)\setminus\Omega^s(\tau)} \mid \nabla_xu_j\mid ^2dx\right)^{\frac12}\times \\ \times& \left(\int_{\Omega^{2s}(\tau)} h_\omega(d(x))u_j(x)^{p+1}dx\right)^{\frac{1}{p+1}}
  \leqslant \\ \leqslant&c_4s^{\frac{p-1}{p+1}} h_\omega(s)^{-\frac{2}{p+1}}\left(I_j(s)-I_j(2s)\right)^{1-\frac{p-1}{p+1}}+ \\ +&
  c_5s^{\frac{p-1}{2(p+1)}} h_\omega(s)^{-\frac{1}{p+1}} \left(I_j(s)-I_j(2s)\right)^{1-\frac{p-1}{2(p+1)}},
\end{split}
\end{equation}
where $I_j(s)=\int_{\Omega_s} \left(\mid \nabla_xu_j\mid ^2+ h_\omega(d(x))u_j(x)^{p+1}\right)dx$.
Plugging estimates \eqref{3.19} and \eqref{3.23} into \eqref{3.18} and using Young's inequality we obtain:
\begin{equation}\label{3.24}
\begin{split}
  \mid R_1\mid \leqslant &c_6\left(\int_{\Gamma^{2s}(\tau)} \mid \nabla u_j\mid ^2\zeta_sd\sigma\right)^{\frac12} \biggr[s^2\int_{\Gamma^{s}(\tau)}\mid \nabla u_j\mid ^2d\sigma + \\ +& s^{\frac{p-1}{p+1}}h_\omega(s)^{-\frac{2}{p+1}}\left(I_j(s)-I_j(2s)\right)^{1-\frac{p-1}{p+1}}+
  \\+&
  s^{\frac{p-1}{2(p+1)}} h_\omega(s)^{-\frac{1}{p+1}} \left(I_j(s)-I_j(2s)\right)^{1-\frac{p-1}{2(p+1)}}\biggr]^\frac12\leqslant
  \\ \leqslant & c_7\biggr[s\int_{\Gamma^{2s}(\tau)}\mid \nabla u_j\mid ^2\zeta_sd\sigma+ \\+ & s^{-1+\frac{p-1}{2(p+1)}}h_\omega(s)^{-\frac{1}{p+1}}\left(I_j(s)-I_j(2s)\right)^{1-\frac{p-1}{2(p+1)}}+
 \\+ &s^{-1+\frac{p-1}{p+1}} h_\omega(s)^{-\frac{2}{p+1}} \left(I_j(s)-I_j(2s)\right)^{1-\frac{p-1}{p+1}}\biggr].
\end{split}
\end{equation}
Finally, we estimate $R_2$. Using H\"{o}lder's inequality and property \eqref{1.19} we get:
\begin{equation}\label{3.25}
\begin{split}
\mid R_2\mid \leqslant & cs^{-1}\left(\int_{\Omega^{2s}(\tau)\setminus\Omega^{s}(\tau)} \mid \nabla u_j\mid ^2dx\right)^{\frac12} \left(\int_{\Omega^{2s}(\tau)\setminus\Omega^{s}(\tau)} u_j^2dx\right):=\\:=&cs^{-1}(R_2^{(1)})^\frac12(R_2^{(2)})^\frac12.
\end{split}
\end{equation}
The term $R_2^{(2)}$ coincides with $R_{1,1,2}^{(2)}$ and it can be estimated as in \eqref{3.22}. Therefore, by Young's inequality we get from \eqref{3.25} that
\begin{equation}\label{3.26}
\mid R_2\mid \leqslant cs^{-\left(1-\frac{p-1}{2(p+1)}\right)}h_\omega(s)^{-\frac1{p+1}}
\left(\int_{\Omega^{2s}(\tau)\setminus\Omega^{s}(\tau)} \left(\mid \nabla u_j\mid ^2+h_\omega(d(x))u_j^{p+1}\right)dx\right)^{1-\frac{p-1}{2(p+1)}}.
\end{equation}
Thus, due to estimates \eqref{3.24} and \eqref{3.26} it follows from \eqref{3.17*} that
\begin{multline}\label{3.27}
J_j(s,\tau)\leqslant d_0^{-1}\overline{J}_j(s,\tau)\leqslant cs\int_{\Gamma^{2s}(\tau)}\mid \nabla_xu_j\mid ^2\zeta_s(d(x))d\sigma+ c_1s^{-\frac2{p+1}}h_\omega(s)^{-\frac2{p+1}}\times\\ \times\left(I_j(s)-I_j(2s)\right)^{1-\frac{p-1}{p+1}}+
c_2s^{-\frac{p+3}{2(p+1)}}h_\omega(s)^{-\frac1{p+1}}\left(I_j(s)-I_j(2s)\right)^{1-\frac{p-1}{2(p+1)}}.
\end{multline}
It is easy to check that
\begin{equation}\label{3.28}
\int_{\Gamma^{2s}(\tau)}\left(\mid \nabla u_j\mid ^2+h_\omega(d(x)) u_j^{p+1}\right)\zeta_s(d(x)) d\sigma\leqslant-\bar{c}\frac{d}{d\tau}J_j(s,\tau),
\end{equation}
where $\bar{c}=const>0$ does not depend on $\tau$, $s$, $j$. Substituting \eqref{3.28} into \eqref{3.27} we obtain:
\begin{equation}\label{3.29}
\begin{split}
&J_j(s,\tau)\leqslant \bar{\bar{c}}s\left(-\frac{d}{d\tau}J_j(s,\tau)\right)+c_1F_j(s)\quad\forall\, \tau\in\left(\delta,\frac{\rho_0}{2}\right),\ \forall\,s\in\left(0,\frac{\rho_0}{2}\right),
\\& F_j(s):=\frac{\left(I_j(s)-I_j(2s)\right)^{1-\frac{p-1}{p+1}}}{s^{\frac2{p+1}}h_\omega(s)^{\frac2{p+1}}} +\frac{\left(I_j(s)-I_j(2s)\right)^{1-\frac{p-1}{2(p+1)}}}{s^{\frac{p+3}{2(p+1)}}h_\omega(s)^{\frac1{p+1}}}
\end{split}
\end{equation}
It  only remains to estimate $F_j(s)$ from above. By lemma~\ref{Lem3.1} we have the following uniform, with respect to $j\in\mathbb{N}$, upper estimate for the energy functions $I_j$:
\begin{equation}\label{3.4*}
I_j(s)\leqslant
d_3\left[\int_0^sh_\omega(r)^{\frac2{p+3}}dr\right]^{-\frac{p+3}{p-1}}
\quad\forall\,s:0<s<\bar{s}.
\end{equation}
Since $\omega(\cdot)$ is a nondecreasing function it is easy to check (see lemma~2.4 from \cite{ShV1}) that
\begin{equation}\label{3.30}
\int_0^s\exp\left(-\frac{b\,\omega(t)}{t}\right)dt\geqslant \frac{s^2}{2s+b\,\omega(s)}\exp\left(-\frac{b\,\omega(s)}{s}\right)\quad \forall\,b>0.
\end{equation}
Therefore, by \eqref{3.30} it follows from \eqref{3.4*} that
\begin{equation}\label{3.31}
I_j(s)\leqslant
d_3\left(\frac{2s+\frac{2}{p+3}\omega(s)}{s^2}\right)^{\frac{p+3}{p-1}} \exp\left(\frac{2}{p-1}\frac{\omega(s)}{s}\right):=d_3\Phi_1(s)h_\omega(s)^{-\frac2{p-1}}.
\end{equation}
Substituting estimate \eqref{3.31} into the definition of the function $F_j(s)$ we obtain:
\begin{equation}\label{3.32}
F_j(s)\leqslant d_4\left(\left(\frac{\Phi_1(s)}{s}\right)^\frac2{p+1}+ \left(\frac{\Phi_1(s)}{s}\right)^\frac{p+3}{2(p+1)}\right) h_\omega(s)^{-\frac2{p-1}}.
\end{equation}
By condition \eqref{1.23} for the function $\omega(s)$ we have:
\begin{equation*}
h_\omega(s)^{\nu}\leqslant\exp\left(-\nu s^{-(1-\gamma_1)}\right) \qquad\forall\,s\in(0,\rho_0),\ \forall\,\nu>0,
\end{equation*}
which yields the upper estimate for $F_j(s)$:
\begin{equation}\label{3.33}
F_j(s)\leqslant
C(\nu)h_\omega(s)^{-\frac2{p-1}-\nu}\quad\forall\,\nu>0,\ C(\nu)\to\infty\text{ as }\nu\to0.
\end{equation}
\end{proof}

Now we get down to the main step of the proof of the theorem, which consists of a careful analysis of the vanishing properties of the energy functions $J_j(s,\tau)$, satisfying inequalities \eqref{3.16*} for all $j\in\mathbb{N}$. Notice that due to global estimate \eqref{3.13} function $J_j(s,\tau)$ satisfies the following "initial" condition:
\begin{equation}\label{3.34}
J_j(s,\delta)\leqslant
\overline{K}_j:=\bar{c}(K_j^{p+1}+\delta^{-1}K_j^2)\quad\forall\,j\in\mathbb{N},
\end{equation}
where $\delta>0$ and $K_j$ are from boundary condition \eqref{3.11}. Let us fix $j$ large enough and $\nu>0$ small enough. Next we define $s_j>0$ by the following relation:
\begin{equation}\label{3.35}
C(\nu)h_\omega(s_j)^{-\frac2{p-1}-\nu}=\overline{K}_j^\theta,\quad C(\nu)\text{ from \eqref{3.33}},
\end{equation}
where $0<\theta<1$ will be defined later. It follows from \eqref{3.16*}, \eqref{3.34} that $J_j(s_j,\tau)$ satisfies the following differential inequality:
\begin{equation}\label{3.36}
\begin{split}
&J_j(s_j,\tau)\leqslant \tilde{c}s_j\left(-\frac{d}{d\tau}J_j(s_j,\tau)\right)+\overline{K}_j^\theta\quad \forall\,\tau:\delta<\tau<2^{-1}\rho_0,\\
&J_j(s_j,\delta)\leqslant\overline{K}_j.
\end{split}
\end{equation}
Let us define now value $\tau_j$ by the equality:
\begin{equation}\label{3.37}
J_j(s_j,\delta+\tau_j)=2\overline{K}_j^\theta,
\end{equation}
where $\theta$ is from \eqref{3.35}. To find an upper estimate for $\tau_j$, we notice that
\begin{equation*}
J_j(s_j,\tau)>2\overline{K}_j^\theta\quad\forall\,\tau\in(\delta,\delta+\tau_j).
\end{equation*}
Hence \eqref{3.36} yields:
\begin{equation}\label{3.38}
J_j(s_j,\tau)\leqslant2\tilde{c}s_j\left(-\frac{d}{d\tau}J_j(s_j,\tau)\right) \quad\forall\,\tau\in(\delta,\delta+\tau_j).
\end{equation}
Solving this differential inequality and taking into account the initial condition in \eqref{3.36}, we obtain:
\begin{equation}\label{3.39}
J_j(s_j,\tau)\leqslant\overline{K}_j\exp\left(-\frac{\tau-\delta}{2\tilde{c}s_j}\right) \quad\forall\,\tau\in(\delta,\delta+\tau_j).
\end{equation}
By \eqref{3.37} and \eqref{3.39} we get: $2\overline{K}_j^\theta\leqslant\overline{K}_j \exp\left(-\frac{\tau_j}{2\tilde{c}s_j}\right)$, where $\tilde{c}$ is a constant from \eqref{3.36}. Hence, $\tau_j$ satisfies:
\begin{equation}\label{3.40}
0<\tau_j\leqslant2\tilde{c}s_j(-\ln2+(1-\theta)\ln\overline{K}_j).
\end{equation}
Next, notice that by definitions \eqref{3.3}, \eqref{3.15*} we have:
\begin{equation}\label{3.41}
\int_{\Omega^{\rho_0}(\delta+\tau_j)}\left(\mid \nabla u_j\mid ^2+h_\omega(d(x))u_j^{p+1}\right)dx \leqslant I_j(s_j)+J_j(s_j,\delta+\tau_j),\text{ if }\delta+\tau_j<2^{-1}\rho_0.
\end{equation}
Due to estimate \eqref{3.31} and condition \eqref{1.23} on $\omega(s)$, analogously to \eqref{3.33}, we have:
\begin{equation}\label{3.42}
I_j(s_j)\leqslant C_1(\nu)h_\omega(s_j)^{-\frac2{p-1}-\nu}, \quad\forall\,\nu>0,\ C_1(\nu)\to\infty\text{ as }\nu\to0.
\end{equation}
Using now definition \eqref{3.35} of $s_j$ and \eqref{3.37} of $\tau_j$, we deduce from \eqref{3.41} and \eqref{3.42}:
\begin{equation}\label{3.43}
\int_{\Omega^{\rho_0}(\delta+\tau_j)}\left(\mid \nabla u_j\mid ^2+h_\omega(d(x))u_j^{p+1}\right)dx\leqslant \left(2+\frac{C_1(\nu)}{C(\nu)}\right)\overline{K}_j^\theta.
\end{equation}
Now we define sequences $\{{K}_i\}$ and $\{\overline{K}_i\}$, $i=1,2,...$, which are connected by the relation \eqref{3.34}. Firstly introduce
\begin{equation}\label{3.44}
\overline{K}_i:=\exp\exp i,\quad i\in\mathbb{N}.
\end{equation}
Then define $\{{K}_i\}=\{{K}_i(\delta,\bar{c})\}$ as solutions of algebraic equation \eqref{3.34}. It is easy to see that $K_i={K}_i(\delta,\bar{c})\to\infty$ as $i\to\infty$.
Now we have to fix parameter $\theta$ from definition \eqref{3.35} of $s_j$, namely, we have to guarantee the validity of the following inequality:
\begin{equation}\label{3.45}
\left(2+\frac{C_1(\nu)}{C(\nu)}\right)\overline{K}_j^\theta\leqslant \overline{K}_{j-1}
\end{equation}
Due to \eqref{3.44} inequality \eqref{3.45} is equivalent to:
\begin{equation}\label{3.46}
\ln\left(2+\frac{C_1(\nu)}{C(\nu)}\right)+\theta\exp j \leqslant\exp(j-1)=e^{-1}\exp j.
\end{equation}
It is easy to see that \eqref{3.46} is satisfied by
\begin{equation}\label{3.46*}
\theta=(2e)^{-1}\quad\text{if }\,j\geqslant j_0:=1+\ln2+ \ln\ln\left(2+\frac{C_1(\nu)}{C(\nu)}\right).
\end{equation}
With such $\theta$ inequality \eqref{3.43} yields:
\begin{equation}\label{3.47}
\int_{\Omega^{\rho_0}(\delta+\tau_j)}\left(\mid \nabla u_j\mid ^2+h_\omega(d(x))u_j^{p+1}\right)dx\leqslant \overline{K}_{j-1}.
\end{equation}
Now we obtain explicit upper estimates of $\tau_j$, $s_j$, defined by \eqref{3.35}, \eqref{3.37}. Firstly, \eqref{3.35} yields:
\begin{equation}\label{3.48}
\begin{split}
&C(\nu)\exp\left(\big(\frac2{p-1}+\nu\big)\frac{\omega(s_j)}{s_j}\right) =\overline{K}_j^\theta\ \Rightarrow\ \frac\theta2\ln\overline{K}_j\leqslant \left(\frac2{p-1}+\nu\right)\frac{\omega(s_j)}{s_j}\leqslant\theta\ln \overline{K_j}
\\ &\forall\,j\geqslant j'=j'(\nu)=\ln\ln C(\nu)+\ln\theta^{-1}+\ln2.
\end{split}
\end{equation}
By \eqref{3.48}, \eqref{1.23} and \eqref{3.44} we have:
\begin{equation}\label{3.49}
s_j\leqslant 2\left(\frac2{p-1}+\nu\right)\theta^{-1}(\ln \overline{K}_j)^{-1}\omega(s_j)\leqslant 2\left(\frac2{p-1}+\nu\right)\theta^{-1}\omega_0\exp(-j).
\end{equation}
This estimate due to the monotonicity of $\omega(\cdot)$ yields:
\begin{equation}\label{3.50}
\omega(s_j)\leqslant\omega(C_3\exp(-j)),\quad C_3=2\left(\frac2{p-1}+\nu\right)\theta^{-1}\omega_0.
\end{equation}
As to $\tau_j$, we get from \eqref{3.40} and \eqref{3.49} that
\begin{equation}\label{3.51}
\tau_j\leqslant 2\tilde{c}s_j(1-\theta)\ln\overline{K}_j\leqslant
4\tilde{c}(1-\theta)
\left(\frac2{p-1}+\nu\right)\theta^{-1}\omega(s_j)\leqslant C_2\omega(s_j),
\end{equation}
where $C_2=4\theta^{-1}(1-\theta)\tilde{c}
\left(\frac2{p-1}+\nu\right)$. Substituting \eqref{3.50} into \eqref{3.51} we obtain:
\begin{equation}\label{3.52}
\tau_j\leqslant C_2\omega(C_3\exp(-j)).
\end{equation}
So, estimates \eqref{3.47}, \eqref{3.49}, \eqref{3.51} are the results of the first circle of the computation and a starting point for the second circle. Similar to \eqref{3.35}, we define value $s_{j-1}$:
\begin{equation}\label{3.35*}
C(\nu)h_\omega(s_{j-1})^{-\frac2{p-1}-\nu}=\overline{K}_{j-1}^\theta, \quad\theta=(2e)^{-1},\ C(\nu)\text{ is from \eqref{3.33}}.
\end{equation}
Then the energy function $J_j(s_{j-1},\tau)$ satisfies the following differential inequality:
\begin{equation}\label{3.53}
J_j(s_{j-1},\tau)\leqslant \tilde{c}s_{j-1}\left(-\frac{d}{d\tau}J_j(s_{j-1},\tau)\right) +\overline{K}_{j-1}^\theta\quad \forall\,\tau\in(\delta+\tau_j,2^{-1}\rho_0)
\end{equation}
instead of \eqref{3.36}, and the following ''initial'' condition:
\begin{equation}\label{3.54}
J_j(s_{j-1},\delta+\tau_j)\leqslant\overline{K}_{j-1}.
\end{equation}
which is a consequence of inequality \eqref{3.47} from the first circle of the computations. Next we define $\tau_{j-1}$ by the analog of \eqref{3.37}:
\begin{equation}\label{3.37*}
J_j(s_{j-1},\delta+\tau_j+\tau_{j-1})=2\overline{K}_{j-1}^\theta.
\end{equation}
Similar to \eqref{3.38}, by \eqref{3.53}, \eqref{3.54}, \eqref{3.37*} we have the following relation:
\begin{equation}\label{3.38*}
J_j(s_{j-1},\tau)\leqslant2\tilde{c}s_{j-1}\left(-\frac{d}{d\tau}J_j(s_{j-1},\tau)\right) \quad\forall\,\tau\in(\delta+\tau_j,\delta+\tau_j+\tau_{j-1}).
\end{equation}
Solving this differential inequality by the ''initial'' condition \eqref{3.54}, we obtain:
\begin{equation}\label{3.39*}
J_j(s_{j-1},\tau)\leqslant\overline{K}_{j-1}\exp\left(-\frac{\tau-\delta-\tau_j}{2\tilde{c}s_{j-1}}\right) \quad\forall\,\tau\in(\delta+\tau_j,\delta+\tau_j+\tau_{j-1}).
\end{equation}
Definition \eqref{3.37*} of $\tau_{j-1}$ and estimate \eqref{3.39*} lead to the explicit estimate of $\tau_{j-1}$:
\begin{equation*}
\tau_{j-1}\leqslant 2\tilde{c}s_{j-1}\left(-\ln2+(1-\theta)\ln\overline{K}_{j-1}\right),
\end{equation*}
and, finally, to the following analogs of estimates \eqref{3.49}, \eqref{3.52}:
\begin{equation}\label{3.55}
s_{j-1}\leqslant C_3\exp(-j+1),\quad \tau_{j-1}\leqslant C_2\omega(C_3\exp(-j+1)).
\end{equation}
Inequality \eqref{3.31} yields the analog of \eqref{3.42}:
\begin{equation}\label{3.42*}
I_j(s_{j-1})\leqslant\frac{C_1(\nu)}{C(\nu)}
\left(C(\nu)h_\omega(s_{j-1})^{-\frac2{p-1}-\nu}\right).
\end{equation}
Summing estimates \eqref{3.37*} and \eqref{3.42*}, using definition \eqref{3.35*} of $s_{j-1}$ and keeping in mind the validity of property \eqref{3.45} for all $j\geqslant j_0$ with $j_0$ from \eqref{3.46*}, we get:
\begin{multline}\label{3.56}
\int_{\Omega^{\rho_0}(\delta+\tau_j+\tau_{j-1})}\left(\mid \nabla_x u_j\mid ^2+h_\omega(d(x))u_j^{p+1}\right)dx\leqslant \left(2+\frac{C_1(\nu)}{C(\nu)}\right)\overline{K}_{j-1}^\theta\leqslant \\ \leqslant\overline{K}_{j-2},\text{ if }j>j_0+1.
\end{multline}
This estimate is the result of the second circle of computations and a starting point for the next circle. Realizing $i$ such circles, we obtain the following analog of estimates \eqref{3.56}, \eqref{3.55}:
\begin{equation}\label{3.56*}
\int_{\Omega^{\rho_0}(\delta+\sum_{k=0}^{i-1}\tau_{j-k})}\left(\mid \nabla_x u_j\mid ^2+h_\omega(d(x))u_j^{p+1}\right)dx\leqslant \overline{K}_{j-i}.
\end{equation}
\begin{equation}\label{3.55*}
\tau_{j-k}\leqslant C_2\omega(C_3\exp(-j+k)),\quad
s_{j-k}\leqslant C_3\exp(-j+k),\quad \forall\,k\leqslant i-1.
\end{equation}
There are two restrictions on value $i$. First of them follows from \eqref{3.46}, \eqref{3.46*}:
\begin{equation}\label{3.57}
j-i\geqslant j_0:=1+\ln2+\ln\ln(2+C_1(\nu)C(\nu)^{-1}).
\end{equation}
The second restriction follows from the analog of \eqref{3.38}, \eqref{3.38*}, namely, estimates of the interval, where differential inequality for energy function $J_j(s_{j-i},\tau)$ has to be satisfied:
\begin{equation}\label{3.58}
\delta+\sum_{k=0}^{i-1}\tau_{j-k}\leqslant2^{-1}\rho_0.
\end{equation}
Due to estimate \eqref{3.55*} and monotonicity of $\omega(\cdot)$ we have:
\begin{equation}\label{3.59}
\begin{split}
\sum_{k=0}^{i-1}\tau_{j-k}\leqslant & C_2\sum_{k=0}^{i-1}\omega(C_3\exp(-j+k))\leqslant C_2C_3^{-1}\int_{C_3\exp(-j)}^{C_3\exp(-j+i)}r^{-1}\omega(r)dr\leqslant
\\ \leqslant &C_2C_3^{-1}\int_{0}^{C_3\exp(-(j-i))}r^{-1}\omega(r)dr=: C_2C_3^{-1}\Phi(j-i),
\end{split}
\end{equation}
where $\Phi(s)\to0$ as $s\to\infty$ due to the Dini condition \eqref{1.24}. Therefore for arbitrary $\delta>0$, $\rho_0>2\delta$ there exists finite $j^{(0)}=j^{(0)}(\delta,\rho_0)$ such that $\delta+C_2C_1^{-1}\Phi(j^{(0)})\leqslant2^{-1}\rho_0$ and, hence, condition \eqref{3.58} is satisfied if $j-i\geqslant j^{(0)}$. Thus, due to \eqref{3.59} estimate \eqref{3.56*} leads to:
\begin{equation}\label{3.60}
\int_{\Omega^{\rho_0}(\delta+C_2C_3^{-1}\Phi(j-i))} \left(\mid \nabla u_j\mid ^2+h_\omega(d(x))u_j^{p+1}\right)dx \leqslant \overline{K}_{j-i}\quad\forall\,j:j-i\geqslant\bar{j},
\end{equation}
where $\bar{j}:=\max\{j_0,j^{(0)},j'\}$, $j'$ is from \eqref{3.48}, $u_j(x)=u_{j,\delta}(x)$. Let us notice that for arbitrary small $\delta>0$ we can find finite number $j^{(1)}=j^{(1)}(\delta)$ such that:
\begin{equation}\label{3.61}
j^{(1)}=\min(j\in\mathbb{N}:\delta\geqslant K_j^{-(p-1)}).
\end{equation}
By definition \eqref{3.34} of $\overline{K}_j$ we have:
\begin{equation}\label{3.62}
\overline{K}_{j}\leqslant2\bar{c}K_j^{p+1}\quad\forall\,j\geqslant j^{(1)},
\end{equation}
which means that $\overline{K}_{j}$ does not depend on $\delta$ if $j\geqslant j^{(1)}$. Now by condition \eqref{1.24} we can find finite $j^{(2)}=j^{(2)}(\delta)\geqslant j^{(1)}(\delta)$, such that
\begin{equation*}
C_2C_3^{-1}\Phi(j-i)\leqslant\delta\quad\forall\,j:j-i>j^{(2)}
\end{equation*}
and, by \eqref{3.60} we get:
\begin{equation}\label{3.63}
\int_{\Omega^{\rho_0}(2\delta)} \left(\mid \nabla u_j\mid ^2+h_\omega(d(x))u_j^{p+1}\right)dx \leqslant \overline{K}_{j''}\quad\text{if }\,j>j'':=\max\{\bar{j},j^{(2)}\}.
\end{equation}
This estimate yields the following uniform with respect to $j\in\mathbb{N}$ a priori estimate:
\begin{equation}\label{3.64}
\mid u_{j,\delta}\mid _{H^1(\Omega^{\rho_0}(2\delta), \partial\Omega^{\rho_0}(2\delta)\cap\partial\Omega)}\leqslant C=C(\delta)<\infty\quad\forall\,j\in\mathbb{N},
\end{equation}
where for an arbitrary set $S\subset\partial\Omega$ by $H^1(\Omega,S)$ we define the closure in the norm of $H^1(\Omega)$ of set $C^1(\Omega,S):=\{f\in C^1(\Omega):f\mid _S=0\}$. Since $h_\omega(d(x))\geqslant0$ in $\overline{\Omega}$, all functions $u_{j,\delta}$ are subsolutions of the corresponding linear elliptic equation. Therefore, by Harnack inequality, for subsolutions of linear elliptic equations (see, e.g., \cite{GT}) we get
\begin{equation}\label{3.65}
\Big(\sup_{\Omega^{\rho_1}(3\delta)}u_{j,\delta}\Big)^2\leqslant c(\delta,\rho_0)\int_{\Omega^{\rho_0}(2\delta)} \mid u_{j,\delta}(x)\mid ^2dx\quad\forall\,j\in\mathbb{N},\ \forall\,\delta>0,\ \rho_1=\frac{\rho_0}2.
\end{equation}
By \eqref{3.65} and \eqref{3.63} we have:
\begin{equation}\label{3.66}
\sup_{\Omega^{\rho_1}(3\delta)}u_{j,\delta}
\leqslant c_1(\delta)\quad\forall\,j\in\mathbb{N},\ \forall\,\delta>0.
\end{equation}
In virtue of \eqref{3.12} the last inequality yields:
\begin{equation}\label{3.67}
\sup_{\Omega^{\rho_1}(3\delta)}u_{j}
\leqslant c_1(\delta)\quad\forall\,j\in\mathbb{N},\ \forall\,\delta>0,
\end{equation}
where $u_j$ is a solution to the problem \eqref{1.17}, \eqref{1.22}. It is easy to see that $u_j(x)$ is a solution to the following problem:
\begin{align}
  & -Lu_j=g_j(x):=H(x)u_j(x)^p\quad\text{in }\Omega^{\rho_1}(3\delta) \label{3.68}\\
  & u_j\mid _{\partial\Omega^{\rho_1}(3\delta)\cap\partial\Omega}=0 \quad\forall\,j\in\mathbb{N},\label{3.69}
\end{align}
where, $\mid g_j\mid _{L^q(\Omega^{\rho_1}(3\delta))}\leqslant c_2(\delta)$ $\forall\,j\in\mathbb{N}$, $\forall\,q>1$ due to \eqref{3.67}. Hence, by the classical $L^q$ a priori estimates for solutions of linear elliptic problems (see, for example, \cite{GT}) we get:
\begin{equation}\label{3.70}
\mid u_j\mid _{W^{2,q}(\Omega^{\rho_2}(4\delta))}\leqslant c_3(\delta)
\quad\forall\,j\in\mathbb{N},\ \forall\,q>1,\ \rho_2=\frac{\rho_1}2.
\end{equation}
By the comparison principle, sequence $\{u_j\}$, $j=1,2,...$, is monotonically nondecreasing in $\Omega$ and, hence, $u_j(x)\to u_\infty(x)$ pointwise for all $x\in\overline{\Omega}$. Then by the uniform estimate \eqref{3.70} and the compact embedding of the space $W^{2,q}(\Omega^{\rho_2}(4\delta))$ into $C^{1,\lambda}(\overline{\Omega}^{\rho_2}(4\delta))$, $0<\lambda<1-\frac Nq$, we have:
\begin{equation}\label{3.71}
\mid u_j-u_\infty\mid _{C^{1,\lambda}(\overline{\Omega}^{\rho_2}(4\delta))}\to0 \text{ as }j\to\infty.
\end{equation}
Since $\delta$ is an arbitrary positive number and $u_j=0$ on $\partial\Omega\setminus\{\cup_{i\leqslant l}\overline{G}_i\}$ it follows from \eqref{3.71} that $u_\infty(x)=0$ $\forall\,x\in\partial\Omega\setminus\{\cup_{i\leqslant l}\overline{G}_i\}$. Theorem~\ref{Th.2} is proved.
\end{proof}

%%%%%%%%%%%%%%%%%%%%%%%%%%%%%%%%%%%%%%%%%%%%%%%%%%%%%%%%%%%%%%%%%%%
\section{About the uniqueness of large solution}

Here we prove Theorem~\ref{Th.3}. Our proof consists in verifying that the equation under consideration has the so-called strong barrier property. This property was introduced in \cite{MV3,MV4}, where the sufficiency of this property for the uniqueness of the large solution was proved for the equation under consideration.

\begin{definition}\label{Def4.1} (see \cite{MV4}, Def. 2.6)
Let $z\in\partial\Omega$. We say that equation \eqref{1.17} possesses a strong barrier at point $z$ if there exists a number $R_z>0$ such that for every $r\in(0,R_z)$ there exists a positive supersolution $u=u_{r,z}\in C(\overline{\Omega}\cap B_r(z))$ of equation \eqref{1.17} in $\Omega\cap B_r(z)$, such that
\begin{equation}\label{4.1}
\lim_{y\to x,\ y\in\Omega\cap B_r(z)}u_{r,z}(y)=\infty \text{ for all }x\in\Omega\cap\partial B_r(z).
\end{equation}
If an equation has the mentioned property for an arbitrary point $z\in\partial\Omega$ then we say that the equation under consideration has the strong barrier property.
\end{definition}

Without loss of generality we suppose that $\Omega\subset\mathbb{R}_+^N$, $z=0\in\partial\Omega$, $B_R(0)\cap\Omega=\{x=(x'',x_N)\in B_R(0):x_N>0\}:=\Omega_{R}$ and $d(x):=dist(x,\partial\Omega)=x_N$ $\forall\,x\in\Omega_{R}$. Introduce also a weight function
\begin{equation}\label{4.2}
\rho(x):=dist(x,\partial B_R(0))=R-\mid x\mid =R-(\mid x''\mid ^2+x_N^2)^\frac12,
\end{equation}
and a surface $\Gamma_R:=\{x\in\Omega_{R}:\rho(x)=d(x)\}$. It is easy to check that $\Gamma_R$ is a paraboloid:
\begin{equation}\label{4.3}
\Gamma_R=\left\{x\in\Omega_{R}:x_N=\frac{R^2-\mid x''\mid ^2}{2R}\quad\forall x'':\mid x''\mid  \leqslant R\right\}.
\end{equation}
Now we are going to prove that equation \eqref{1.17} has the property from Definition~\ref{4.1} in the domain $\Omega\subset\mathbb{R}^N_+$. Namely, let $\{u_j(x)\}$ be an increasing sequence of solutions of equation \eqref{1.17} in the domain $\Omega_R$ satisfying the boundary conditions:
\begin{equation}\label{ad3}
u\mid _{\partial\Omega_{R}\cap\Omega}=K_j,\quad
u\mid _{\partial\Omega\cap B_R(0)}=0\quad
K_j\to\infty\text{ as }j\to\infty.
\end{equation}
We will prove that $u_\infty=\lim_{j\to\infty}u_j(x)$ is a strong barrier at point $0\in\partial\Omega$ for equation \eqref{1.17} in the sense of Definition~\ref{Def4.1}.
Firstly, we introduce a sequence $\{u_{j,\delta}(x)\}$ of solutions of equation \eqref{1.17}, satisfying the following ''regularized'' boundary conditions:
\begin{equation}\label{4.4}
\begin{split}
&u\mid _{\partial\Omega_{R}\cap\Omega}=K_j,\quad
u\mid _{\partial\Omega\cap B_R(0)}=K_j\xi_\delta(\mid x''\mid ),
\\&\text{where } \xi_\delta(s)=\begin{cases}
                0, & \forall\,s\leqslant R-\delta, \\
                1-\delta^{-1}(R-s), & \forall\,s>R-\delta
              \end{cases}
\end{split}
\end{equation}
and $\delta>0$ arbitrary small. By the comparison principle we have $u_{j,\delta}(x)\geqslant u_j(x)$ $\forall\,j\in\mathbb{N}$, $\forall\,\delta>0$ and, hence, $u_{\infty,\delta}(x)\geqslant u_\infty(x)$ $\forall\,x\in\overline{\Omega}_{R}$. The main part of our analysis consists in proving that
\begin{equation}\label{ad4}
u_{\infty,\delta}(x)=0\quad\forall\,x=(x'',x_N):x_N=0,\ \mid x''\mid <R-c\delta,
\end{equation}
where $c=const<\infty$ does not depend on $\delta$. This proof is some adaptation of the proof of Theorem~\ref{Th.2}. Similarly as in \eqref{3.1}, introduce families of subdomains:
\begin{align}\label{4.5}
&\Omega_{R,s}=\{x\in\Omega_{R}:\bar{\rho}(x)>s\} \quad\forall\,s:0<s<\frac R2,\\
&\Omega_{R}^s=\{x\in\Omega_{R}:0<\bar{\rho}(x)<s\} \quad\forall\,s:0<s<\frac R2,\label{4.6}
\end{align}
where $\bar{\rho}(x):=\min \{\rho(x),d(x)\}$, $d(x)=x_N$, $\rho(\cdot)$ is from \eqref{4.2}. If $u$ is an arbitrary nonegative solution of equation \eqref{1.17} in $\Omega_{R}$ then we introduce the following energy function:
\begin{equation}\label{4.7}
I(s):=\int_{\Omega_{R,s}}(\mid \nabla_xu\mid ^2+h_\omega(d(x))u^{p+1})dx \quad\forall\,s\in(0,2^{-1}R).
\end{equation}

\begin{lemma}\label{Lem4.1}
The function $I(\cdot)$ from \eqref{4.7} satisfies:
\begin{equation}\label{4.8}
I(s)\leqslant c_1\left[\int_0^sh_\omega(r)^\frac2{p+3} dr\right]^{-\frac{p+3}{p-1}} \quad\forall\,s\in(0,2^{-1}R),
\end{equation}
where constant $c_1<\infty$ does not depend on $u$.
\end{lemma}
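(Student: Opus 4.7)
The plan is to repeat the proof of Lemma~\ref{Lem3.1} essentially verbatim, with $\Omega_s$ replaced by $\Omega_{R,s}$ and with attention paid to the new shape of the integration boundary. First I would multiply equation \eqref{1.17*} by $u$ and integrate over $\Omega_{R,s}$, producing the same energy identity
\[\int_{\Omega_{R,s}}\Bigl(\sum_{i,j}a_{ij}u_{x_i}u_{x_j}+H(x)u^{p+1}\Bigr)dx=\int_{\partial\Omega_{R,s}}\sum_{i,j}a_{ij}u_{x_i}u\,\nu_j\,d\sigma\]
that served as the starting point of \eqref{3.5}. Because $\bar\rho=\min(\rho,d)$ is the minimum of two $1$-Lipschitz distance-like functions, one still has $|\nabla\bar\rho|=1$ almost everywhere, so the coarea identity $I'(s)=-\int_{\partial\Omega_{R,s}}(|\nabla u|^2+h_\omega(d(x))u^{p+1})\,d\sigma$ remains valid.

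Next I would apply Cauchy--Schwarz to the boundary term and then Hölder's inequality to bound $\int_{\partial\Omega_{R,s}}u^2\,d\sigma$ in terms of $\int_{\partial\Omega_{R,s}}h_\omega(d(x))u^{p+1}\,d\sigma$, following the passage \eqref{3.5}--\eqref{3.6} line by line. The only step that is not a literal copy is the replacement of $h_\omega(d(x))$ by the constant $h_\omega(s)$ inside the Hölder estimate: in Lemma~\ref{Lem3.1} this was an equality because $d(x)=s$ on $\partial\Omega_s$. Here the boundary $\partial\Omega_{R,s}$ splits into a \emph{flat} piece $\{d(x)=s,\ \rho(x)\geqslant s\}$ and a \emph{spherical} piece $\{\rho(x)=s,\ d(x)\geqslant s\}$, meeting along the codimension-two curve obtained by intersecting the paraboloid $\Gamma_R$ from \eqref{4.3} with the level $\bar\rho=s$. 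On the flat piece the equality $h_\omega(d)=h_\omega(s)$ persists; on the spherical piece one only has $d(x)\geqslant s$, and the structural hypotheses on $\omega$ from \eqref{1.23} (nondecreasing, with $s^{\gamma_1}\leqslant\omega(s)\leqslant\omega_0$) make $t\mapsto\omega(t)/t$ nonincreasing in the relevant range, hence $h_\omega$ nondecreasing there, which yields $h_\omega(d(x))\geqslant h_\omega(s)$ and therefore the desired inequality.

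Combining these estimates gives exactly the first-order differential inequality obtained for Lemma~\ref{Lem3.1}, namely
\[I(s)\leqslant c\,h_\omega(s)^{-\frac{1}{p+1}}\bigl(-I'(s)\bigr)^{1-\frac{p-1}{2(p+1)}}.\]
Rewriting this as $-I'(s)\geqslant c'\,h_\omega(s)^{\frac{2}{p+3}}I(s)^{\frac{2(p+1)}{p+3}}$ and integrating the resulting ODE for the quantity $I(s)^{-(p-1)/(p+3)}$ from $0$ to $s$ (using only that $I(0^{+})^{-(p-1)/(p+3)}\geqslant 0$) produces exactly the upper bound \eqref{4.8}, with a constant $c_1<\infty$ depending on $p$ and on the ellipticity and structural constants but not on $u$.

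The main, though modest, obstacle is geometric: the boundary $\partial\Omega_{R,s}$ is no longer a single smooth level set of $d(\cdot)$ but the union of two smooth pieces meeting along a subset of $\Gamma_R$. The corner set has zero surface measure and is irrelevant for the integration by parts, so the actual work reduces to verifying the uniform comparison $h_\omega(d(x))\geqslant h_\omega(s)$ on the spherical piece --- an immediate consequence of how $\omega$ is assumed to behave in \eqref{1.23}. Once that point is settled, everything else in the proof of Lemma~\ref{Lem3.1} transfers without change.
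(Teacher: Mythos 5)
Your proof follows the same route the paper intends: the paper states only that Lemma~\ref{Lem4.1} ``repeats the proof of lemma~\ref{Lem3.1} with small nonessential changes and we omit it,'' and the changes you describe --- integrating by parts over $\Omega_{R,s}$, using the coarea formula for $\bar\rho=\min(\rho,d)$, applying the weighted Cauchy--Schwarz and H\"older steps from \eqref{3.5}--\eqref{3.6}, and solving the resulting first-order differential inequality for $I(s)^{-(p-1)/(p+3)}$ --- are indeed the ``small nonessential changes.'' You also correctly identify the one genuinely new geometric point: on the level set $\{\bar\rho=s\}$ the spherical piece satisfies only $d(x)\geqslant s$ rather than $d(x)=s$.

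There is, however, one step in your write-up whose justification is wrong as stated. You claim that \eqref{1.23} alone (``$\omega$ nondecreasing with $s^{\gamma_1}\leqslant\omega(s)\leqslant\omega_0$'') forces $t\mapsto\omega(t)/t$ to be nonincreasing, hence $h_\omega$ nondecreasing, and hence $h_\omega(d(x))\geqslant h_\omega(s)$ on the spherical piece. That implication is false: $\omega$ nondecreasing and bounded does not prevent $\omega(t)/t$ from increasing on subintervals (one can build nondecreasing step-like $\omega$ that satisfy \eqref{1.23} yet have $\omega(t)/t$ oscillate). The property you actually need --- $\mu(s)=\omega(s)/s$ nonincreasing --- is exactly hypothesis \eqref{1.4}(ii), which the paper imposes explicitly in the model problem but does not restate among the hypotheses of Theorems~\ref{Th.3}/\ref{Th.2}. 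So either you should invoke \eqref{1.4}(ii) as an additional standing assumption (the most natural fix, and plausibly what the authors intend), or you need a different argument to compare $h_\omega(d(x))$ with $h_\omega(s)$ on the spherical arc; it does not come for free from \eqref{1.23}. Everything else in your proposal is correct, and apart from this one justification the argument is precisely the one the paper has in mind.
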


\noindent The proof is similar to the proof of lemma~\ref{Lem3.1} with nonessential changes, so we omit it.

\begin{lemma}\label{Lem4.2}
Solution $u_j(x):=u_{j,\delta}$ of the problem \eqref{1.17}, \eqref{4.4} in the domain $\Omega_{R}$ satisfies the following a priori estimate:
\begin{equation}\label{4.9}
\int_{\Omega_{R}}(\mid \nabla u_j\mid ^2+h_\omega(d(x)) u_j^{p+1})dx\leqslant\overline{K}_j:=c_2(K_j^{p+1}+\delta^{-1}K_j^2),
\end{equation}
where $c_2<\infty$ does not depend on $j\in\mathbb{N}$.
\end{lemma}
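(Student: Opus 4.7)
The plan is to adapt the argument of Lemma~\ref{Lem3.2} to the new boundary geometry. Since the boundary datum \eqref{4.4} consists of two continuous pieces, namely $K_j$ on the spherical face $\partial\Omega_{R}\cap\Omega$ and $K_j\xi_\delta(|x''|)$ on the flat face $\partial\Omega\cap B_R(0)$, which match across the edge $\{x_N=0,\,|x''|=R\}$ because $\xi_\delta(R)=1$, I first construct a $C^1$ extension $W_j(x)=K_j\psi_\delta(x)$ of $u_j|_{\partial\Omega_{R}}$ into $\overline\Omega_{R}$ satisfying $0\le\psi_\delta\le 1$, $|\nabla\psi_\delta|\le c\delta^{-1}$ uniformly, and with the support of $\nabla\psi_\delta$ contained in a $\delta$-tubular neighbourhood of $\partial\Omega_{R}$ of Lebesgue measure $O(\delta)$. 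Such a $\psi_\delta$ can be built as a standard product-type cut-off using $\xi_\delta$, the scalar cut-off $\zeta_\delta$ from the proof of Lemma~\ref{Lem3.2}, and the function $\rho(x)$ from \eqref{4.2}.

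The next step is to multiply equation \eqref{1.17} by $v_j:=u_j-W_j$, which vanishes on $\partial\Omega_{R}$, and integrate over $\Omega_{R}$. Since the boundary term disappears, this yields the identity
\begin{equation*}
\int_{\Omega_{R}}\Big(\sum_{i,k=1}^N a_{ik}u_{j,x_i}u_{j,x_k}+H(x)u_j^{p+1}\Big)dx=K_j\int_{\Omega_{R}}\sum_{i,k=1}^N a_{ik}u_{j,x_i}(\psi_\delta)_{x_k}\,dx+K_j\int_{\Omega_{R}}H(x)u_j^{p}\psi_\delta\,dx=:A_1+A_2.
\end{equation*}

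Then I apply Young's inequality to each term: to $A_1$ via ellipticity \eqref{1.19} and Cauchy--Schwarz,
\begin{equation*}
|A_1|\le\tfrac12\int_{\Omega_{R}}\sum_{i,k}a_{ik}u_{j,x_i}u_{j,x_k}\,dx+c\,K_j^2\int_{\Omega_{R}}|\nabla\psi_\delta|^2dx,
\end{equation*}
and to $A_2$ via weighted Young with exponents $(p+1)/p$ and $p+1$,
\begin{equation*}
|A_2|\le\tfrac12\int_{\Omega_{R}}H(x)u_j^{p+1}dx+c'\,K_j^{p+1}\int_{\Omega_{R}}H(x)\psi_\delta^{p+1}dx.
\end{equation*}
Since $|\nabla\psi_\delta|\le c\delta^{-1}$ on a set of Lebesgue measure $O(\delta)$ one gets $\int_{\Omega_{R}}|\nabla\psi_\delta|^2dx\le c_1\delta^{-1}$, while $0\le\psi_\delta\le 1$ together with the boundedness of $H$ on the bounded domain $\Omega_{R}$ gives $\int_{\Omega_{R}}H(x)\psi_\delta^{p+1}dx\le c_0$. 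Absorbing the halved integrals into the left-hand side and invoking ellipticity \eqref{1.19} a second time yields \eqref{4.9} with $c_2$ independent of $j\in\mathbb N$.

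The only delicate point is the explicit construction of $\psi_\delta$ near the non-smooth edge $\{x_N=0,\,|x''|=R\}$ of $\Omega_{R}$ so as to maintain the uniform bound $|\nabla\psi_\delta|\le c\delta^{-1}$ while respecting both boundary conditions simultaneously. Because the two boundary values are continuous across the edge and the edge has codimension two, a standard partition-of-unity argument delivers such a $\psi_\delta$ without difficulty; I do not expect any genuine obstacle beyond this routine geometric verification.
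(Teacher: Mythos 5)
Your core argument (multiply by $v_j=u_j-W_j$, integrate by parts, Young's inequality, absorb) is exactly the paper's, so the proof is correct; the only place you diverge is in building the lift $W_j$. The paper avoids your partition-of-unity construction and the edge worry entirely by taking $W_j(x)=K_j\xi_\delta(|x|)$ with the \emph{same} one-variable profile $\xi_\delta$ from \eqref{4.4}, but evaluated at the full radial coordinate $|x|$: on the spherical face $|x|=R$ one has $\xi_\delta(R)=1$, and on the flat face $x_N=0$ one has $|x|=|x''|$, so this single Lipschitz radial function matches both pieces of the boundary datum simultaneously, $|\nabla(\xi_\delta(|x|))|\le\delta^{-1}$ is automatic, and its gradient is supported in the spherical shell $\{R-\delta<|x|<R\}$ of measure $O(\delta)$. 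Using this observation removes the ``delicate point'' you flag at the end; the rest of your estimates then coincide with the paper's (which in turn refers back to the computation in Lemma~\ref{Lem3.2}).
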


\begin{proof}
It is easy to see that
\begin{equation*}
v_j(x):=u_j(x)-K_j\xi_\delta(\mid x\mid )=0\text{ on }\partial\Omega_{R}.
\end{equation*}
Now multiplying equation \eqref{1.17} by $v_j$ and integrating it by parts we get the analog of relation \eqref{3.14}:
\begin{multline}\label{4.10}
\int_{\Omega_{R}}\left(\sum_{i,k=1}^Na_{ik}u_{jx_i}u_{jx_k} +H(x)u_j^{p+1}\right)dx = K_j\int_{\Omega_{R}} \sum_{i,k=1}^Na_{ik}u_{jx_i}\xi_\delta(\mid x\mid )_{x_k}dx+\\
+K_j\int_{\Omega_{R}}H(x)u_j^q\xi_\delta(\mid x\mid )dx.
\end{multline}
Further proof coincides with the proof of lemma~\ref{Lem3.2}.
\end{proof}

Similarly as in \eqref{3.14*}, introduce family of subdomains of $\Omega^s_R$:
\begin{multline}\label{3.14**}
\Omega^s_R(\tau):=\Omega^s_R\cap\big\{(x'',x_N):\mid x''\mid <R-\tau,x_N<s\big\}\quad\forall\,\tau\in(\delta,R),\ \forall\,s\in(0,s_\delta),\\ s_\delta:=\delta-\frac{\delta^2}{2R}= \frac{R^2-(R-\delta)^2}{2R}\text{ (see \eqref{4.3})},
\end{multline}
and, similarly as in \eqref{3.15*}, introduce energy function
\begin{equation}\label{3.15**}
J_j(s,\tau):=\int_{\Omega_R^{2s}(\tau)}\left(\mid \nabla u_j\mid ^2+ h_\omega(x_N)u_j^{p+1}\right)\zeta_s(x_N)dx,
\end{equation}
where $\zeta_s(\cdot)$ is a function from \eqref{3.13*}, \eqref{3.15*}, $u_j=u_{j,\delta}$.

\begin{lemma}\label{Lem4.3}
The energy function $J_j(s,\tau)$ satisfies the following relation:
\begin{equation}\label{3.16**}
\begin{split}
&J_j(s,\tau)\leqslant c_3s\left(-\frac{d}{d\tau}J_j(s,\tau)\right)+Ch_\omega(s)^{-\frac2{p-1}-\nu}\quad \forall\,\tau\in(\delta,R),\ \forall\,j\in\mathbb{N},
\\ &\forall\,s\in(0,2^{-1}s_\delta),\ \forall\,\nu>0,\ C=C(\nu)\to\infty\text{ as }\nu\to0,
\end{split}
\end{equation}
where $c_3,C(\nu)$ do not depend on $j$; $s_\delta$ is from \eqref{3.14**}.
\end{lemma}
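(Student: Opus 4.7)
The plan is to mimic the proof of Lemma~\ref{Lem3.3} with only cosmetic modifications dictated by the new geometry: the role of the flat boundary piece $\Gamma^{2s}(\tau)$ is now played by the lateral cylindrical surface
\[
\Gamma_R^{2s}(\tau):=\{x=(x'',x_N):|x''|=R-\tau,\ 0<x_N<2s\},
\]
and the bottom boundary $\{x_N=0,\ |x''|<R-\tau\}$ is available with zero trace because $\tau>\delta$ forces $\xi_\delta(|x''|)=0$ there by \eqref{4.4}. First I would multiply equation \eqref{1.17} for $u_j$ by $u_j\zeta_s(x_N)$ and integrate over $\Omega_R^{2s}(\tau)$. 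As in \eqref{3.17*} this yields
\[
\overline J_j(s,\tau)=R_1+R_2,
\]
where $R_1$ is the flux integral over $\Gamma_R^{2s}(\tau)$ (the ``top'' piece $\{x_N=2s\}$ is killed by $\zeta_s$, and the bottom $\{x_N=0\}$ contributes nothing because $u_j$ vanishes there) and $R_2$ is the commutator term $-\int\sum a_{ik}u_{jx_i}(\zeta_s)_{x_k}u_j\,dx$ supported in $\Omega_R^{2s}(\tau)\setminus\Omega_R^{s}(\tau)$.

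Next I would estimate $R_1$ by Cauchy--Schwarz exactly as in \eqref{3.18}, splitting the $u_j^2$ factor along $\Gamma_R^{2s}(\tau)$ into a piece over $\{s<x_N<2s\}$ and a piece over $\{0<x_N<s\}$. On the lower piece I apply the one-dimensional Poincar\'e inequality in $x_N$ (valid since $u_j(x'',0)=0$ for $|x''|<R-\delta$) to obtain the analogue of \eqref{3.19} with factor $s^2$. On the upper piece I use the trace interpolation inequality on the $(N-1)$-dimensional sphere $\{|x''|=R-\tau\}$ followed by H\"older in $x_N$ as in \eqref{3.20}--\eqref{3.23}, producing terms controlled by $I_j(s)-I_j(2s)$ with a power of $s\,h_\omega(s)$. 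The term $R_2$ is handled as in \eqref{3.25}--\eqref{3.26} by H\"older plus the same weighted H\"older bound \eqref{3.22}. After applying Young's inequality to reabsorb gradient boundary terms into $-\bar c\,\tfrac{d}{d\tau}J_j(s,\tau)$ via the identity \eqref{3.28} (which still holds with $d(x)$ replaced by $x_N$), I arrive at the exact analogue of \eqref{3.29} with residual
\[
F_j(s)=\frac{(I_j(s)-I_j(2s))^{1-\frac{p-1}{p+1}}}{s^{\frac2{p+1}}h_\omega(s)^{\frac2{p+1}}} +\frac{(I_j(s)-I_j(2s))^{1-\frac{p-1}{2(p+1)}}}{s^{\frac{p+3}{2(p+1)}}h_\omega(s)^{\frac1{p+1}}}.
\]

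Finally, Lemma~\ref{Lem4.1} (the direct analogue of Lemma~\ref{Lem3.1} for the new domain $\Omega_R$) provides the uniform-in-$j$ bound on $I_j(s)$, and the elementary estimate \eqref{3.30} on the integral $\int_0^sh_\omega(r)^{2/(p+3)}dr$ gives $I_j(s)\leqslant d_3\Phi_1(s)h_\omega(s)^{-2/(p-1)}$ as in \eqref{3.31}. Substituting into $F_j(s)$ and using condition \eqref{1.23}, which forces $h_\omega(s)^{-2/(p-1)}\geqslant\exp(\tfrac{2}{p-1}s^{-(1-\gamma_1)})$ and thus absorbs any polynomial factor $\Phi_1(s)/s$ into a factor $h_\omega(s)^{-\nu}$ with $\nu>0$ arbitrarily small, yields the required bound $F_j(s)\leqslant C(\nu)h_\omega(s)^{-2/(p-1)-\nu}$. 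This completes \eqref{3.16**}.

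The only genuine step that is not a copy-paste from Section~2 is checking that the Poincar\'e and trace interpolation inequalities apply uniformly in $\tau\in(\delta,R)$ and $s\in(0,2^{-1}s_\delta)$ with constants independent of $\tau,s,j$; I expect this to be the main technical obstacle, but it follows from standard scaling on the cylindrical shell $\{(x'',x_N):|x''|\in(R-\tau-\eta,R-\tau+\eta),\,x_N\in(s,2s)\}$ together with the smoothness of the sphere $\{|x''|=R-\tau\}$ for $\tau$ bounded away from $R$.
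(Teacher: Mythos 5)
Your proposal follows the paper's proof almost line for line: same test function $u_j\zeta_s(x_N)$, same decomposition $\overline J_j=R_1+R_2$, same trace/Poincar\'e treatment of $R_1$, same H\"older bound on $R_2$, same reduction to $F_j(s)$ and use of Lemma~\ref{Lem4.1} together with \eqref{3.30} and \eqref{1.23}. The one ingredient the paper makes explicit and you pass over silently is the geometric observation (the paper's \eqref{ad1}--\eqref{ad2}) that for $\tau>\delta$ and $2s<s_\delta$ the shell $\Omega^{2s}_R(\tau)\setminus\Omega^{s}_R(\tau)$ lies strictly below the paraboloid $\Gamma_R$ of \eqref{4.3}, so that $\bar\rho(x)=\min\{\rho(x),x_N\}=x_N$ there and hence
\[
\int_{\Omega^{2s}_R(\tau)\setminus\Omega^{s}_R(\tau)}\bigl(|\nabla u_j|^2+h_\omega(x_N)u_j^{p+1}\bigr)dx\leqslant I_j(s)-I_j(2s),
\]
with $I_j$ from \eqref{4.7} defined through $\bar\rho$, not $x_N$. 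This is exactly what the restriction $s<2^{-1}s_\delta$ buys you, and it is the step that makes your phrase ``controlled by $I_j(s)-I_j(2s)$'' legitimate; it deserves to be stated rather than assumed, since the energy function in $J_j$ is built from $x_N$ while the one in Lemma~\ref{Lem4.1} is built from $\bar\rho$.
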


\begin{proof}
Multiplying equation \eqref{1.17} by $u_j(x)\zeta_s(x_N)$, where $u_j:=u_{j,\delta}$, and integrating it over $\Omega^{2s}_R(\tau)$, $\tau\in(\delta,R)$, $2s\leqslant s_\delta=\delta-\frac{\delta^2}{2R}$, we obtain the following analog of \eqref{3.17*}:
\begin{equation}\label{3.17**}
\begin{split}
&\overline{J}_j(s,\tau):=\int_{\Omega^{2s}_R(\tau)} \left(\sum_{i,k=1}^Na_{ik}(x)u_{jx_i}u_{jx_k} +H(x)u_j^{p+1}\right)\zeta_s(x_N)dx= \\&=\int_{\partial''\Omega^{2s}_R(\tau)} \sum_{i,k=1}^Na_{ik}(x)u_{jx_i}u_{j}\nu_k(x)\zeta_s(x_N)d\sigma -\\&-\int_{\Omega^{2s}_R(\tau)\setminus\Omega^s_R(\tau)} \sum_{i,k=1}^Na_{ik}u_{jx_i}\zeta_s(x_N)_{x_k}u_jdx:=R_1+R_2
\quad\forall\,\tau\in(\delta,R),\ s<2^{-1}s_\delta,
\end{split}
\end{equation}
where $\partial''\Omega^{2s}_R(\tau)=\{(x'',x_N):\mid x''\mid =R-\tau, 0<x_N<2s\}$. Now notice the following important property of subdomains:
\begin{equation}\label{ad1}
\Omega^{2s}_R(\tau)\setminus\Omega^{s}_R(\tau)\subset \Omega_{s}(\tau)\setminus\Omega_{2s}(\tau)\quad \forall\,\tau>\delta,\ \forall\,s:0<s<2^{-1}s_\delta
\end{equation}
and, hence,
\begin{equation}\label{ad2}
\int_{\Omega^{2s}_R(\tau)\setminus\Omega^{s}_R(\tau)} (\mid \nabla u_j\mid ^2+h_\omega(x_N)u_j^{p+1})dx\leqslant I_j(s)-I_j(2s) \quad\forall\,\tau>\delta,\ \forall\,s<2^{-1}s_\delta.
\end{equation}
Estimating the term $R_1$ in \eqref{3.17**} by the same way as in \eqref{3.18}--\eqref{3.24}, we obtain due to properties \eqref{ad1}, \eqref{ad2}:
\begin{equation}\label{3.24*}
\begin{split}
  &\mid R_1\mid \leqslant
  c_4s\int_{\partial''\Omega^{2s}_R(\tau)}\mid \nabla u_j\mid ^2\zeta_s(x_N)d\sigma+ \\ &+ s^{-1+\frac{p-1}{2(p+1)}}h_\omega(s)^{-\frac{1}{p+1}}\left(I_j(s)-I_j(2s)\right)^{1-\frac{p-1}{2(p+1)}}+
  \\&+s^{-1+\frac{p-1}{p+1}} h_\omega(s)^{-\frac{2}{p+1}} \left(I_j(s)-I_j(2s)\right)^{1-\frac{p-1}{p+1}}\quad \forall\,\tau\in(\delta,R),\ \forall\,s<\frac\delta2-\frac{\delta^2}{4R}.
\end{split}
\end{equation}
Analogously to \eqref{3.26}, we have:
\begin{equation}\label{3.26*}
\mid R_2\mid \leqslant cs^{-\left(1-\frac{p-1}{2(p+1)}\right)}h_\omega(s)^{-\frac1{p+1}}
(I_j(s)-I_j(2s))^{1-\frac{p-1}{2(p+1)}}\quad\forall\,s<2^{-1}s_\delta.
\end{equation}
Bearing in mind the following analog of relation \eqref{3.28}:
\begin{equation}\label{3.28*}
\int_{\partial''\Omega^{2s}(\tau)}\left(\mid \nabla u_j\mid ^2+h_\omega(x_N) u_j^{p+1}\right)\zeta_s(x_N) d\sigma\leqslant-c_0\frac{d}{d\tau}J_j(s,\tau),
\end{equation}
and using estimates \eqref{3.24*}, \eqref{3.26*} we get from \eqref{3.17**} inequality \eqref{3.29} for all $\tau\in(\delta,R)$ and $s\in(0,2^{-1}s_\delta)$. Estimating now $F_j(s)$ in the same way as in \eqref{3.30}--\eqref{3.32}, we obtain \eqref{3.33} and, consequently, necessary relation \eqref{3.16**}.
\end{proof}

Further proof of Theorem~\ref{Th.3} is similar to the corresponding part of the proof of Theorem~\ref{Th.2} with obvious changes. By the arguments, similar to \eqref{3.34}--\eqref{3.63}, we obtain the following analog of uniform estimate \eqref{3.63}:
\begin{equation}\label{3.63*}
\int_{\Omega^{s_\delta}(2\delta)}(\mid \nabla u_j\mid ^2+h_\omega(x_N)u_j^{p+1})dx\leqslant \overline{K}_{j''} \quad\forall\,j>j''=j''(\delta),
\end{equation}
where $j''(\delta)\to\infty$ as $\delta\to0$. Since $u_j(x):=u_{j,\delta}(x)=0$ $\forall\,x=(x'',x_N):x_N=0,\mid x''\mid <R-\delta$, we deduce from \eqref{3.63*} the following property by the same arguments as in \eqref{3.64}--\eqref{3.71}:
\begin{equation}\label{3.71*}
\mid u_{j,\delta}-u_{\infty,\delta}\mid _{C^{1,\lambda}(\overline{\Omega}^{2^{-1}s_\delta}(4\delta))}\to0 \text{ as }j\to\infty,
\end{equation}
and, consequently, we get property \eqref{ad4} with $c=4$. Now since $u_{\infty,\delta}(x)\geqslant u_\infty(x)$ $\forall\,x\in\overline{\Omega}_{R}$ $\forall\,\delta>0$ relation \eqref{3.71*} implies that $u_\infty(x)=0$ $\forall\,x=(x'',x_N):x_N=0,\mid x''\mid <R$. Moreover, $u_\infty(x)=\infty$ $\forall\,x=(x'',x_N):\mid x\mid =R, x_N>0$. Thus $u_\infty$ is the desired strong barrier for equation \eqref{1.17}. Theorem~\ref{Th.3} is proved.

%%%%%%%%%%%%%%%%%%%%%%%%%%%%%%%%%%%%%%%%%%%%%%%%%%%%%%%%%%%%%%%%%%%%%%%%%%%%%%%%%%%%%%%%%%
%%%%%%%%%%%%%%%%%%%%%%%%%%%%%%%%%%%%%%%%%%%%%%%%%%%%%%%%%%%%%%%%%%%%%%%%%%%%%%%%%%%%%%%%%%
\section{Necessity of the Dini condition for non-propagation of the point singularity}

In this section we prove Theorem~\ref{Th.1}.
%\begin{proof}
%{\bf Proof of Th.1.}
First of all, we construct a family of subsolutions $v_j(x)$, connected with solutions $u_j(x)$, $j=1,2, ...$ of the problem under consideration. Namely, introduce a family of subdomains $\Omega_j$ of the domain $\Omega$ from \eqref{1.1}:
\begin{equation}\label{2.1}
\Omega_j:=\biggr\{x\in\Omega=\mathbb{R}_+^N:\mid x'\mid ^2:=\sum_{i=2}^Nx_i^2<r_j^2,x_1\in\mathbb{R}^1\biggr\},\ r_j=2^{-j},\ j=1,2,...,
\end{equation}
and numbers
\begin{equation}\label{2.2}
a_j=\exp\left(-\mu(r_j)\right)=\exp\left(-\frac{\omega(r_j)}{r_j}\right), A_j=\left(a_jr_j^2\right)^\frac1{p-1}\quad\forall\,j\in\mathbb{N}.
\end{equation}
Consider now a family of auxiliary problems:
\begin{equation}\label{2.3}
-\Delta v+a_jv^p=0\text{ in }\Omega_j,
\end{equation}
\begin{equation}\label{2.4}
v=0\text{ on }\partial\Omega_j\setminus\{x:x_N=0\},
\end{equation}
\begin{equation}\label{2.5}
v\mid _{\partial\Omega_j\cap\{x_N=0\}}=K_j\delta_a(x),\quad
a\in L=\{x=(x_1,0,...,0)\}.
\end{equation}
Due to condition \eqref{1.9*} on $p$, inequality \eqref{1.6} is satisfied for equation \eqref{2.3}, and, hence, problem \eqref{2.3}--\eqref{2.5} has solution $v_j$, $j=1,2, ...$.
Since $u_j(x)\geqslant v_j(x)$ on $\partial\Omega_j$ then
\begin{equation}\label{2.6}
u_j(x)\geqslant v_j(x)\text{ in }\Omega_j\quad\forall\,j\in\mathbb{N}.
\end{equation}
Next, we estimate $v_j(x)$ from below. Let us perform the rescaling of the problem \eqref{2.3}--\eqref{2.5}. Introducing new variables and an unknown function:
\begin{equation}\label{2.7}
y=r_j^{-1}x,\ w_j(y)=A_jv_j(r_jy),\ y\in G:=\{y\in\mathbb{R}^N:\mid y'\mid <1,y_N>0\},
\end{equation}
where $A_j$ is from \eqref{2.2}, we obtain with respect to $w_j(y)$ the following problem:
\begin{equation}\label{2.8}
-\Delta w_j+w_j^p=0\text{ in }G,
\end{equation}
\begin{equation}\label{2.9}
w_j(y)=0\text{ on }\partial G\setminus\{y:y_N=0\},
\end{equation}
\begin{equation}\label{2.10}
w_j\mid _{\partial G\cap\{y_N=0\}}=K_jA_jr_j^{-(N-1)}\delta_{ar_j^{-1}}(y).
\end{equation}
It is easy to see that without loss of generality we can suppose that $a=0$. Let us specify now the choice of the sequence $\{K_j\}$:
\begin{equation}\label{2.11}
K_j:=A_j^{-1}r_j^{N-1}\ \forall\,j\in\mathbb{N}.
\end{equation}
Then $w_j(y)=w(y)$ $\forall\,j\in\mathbb{N}$, where $w(y)$ is a solution of the problem:
\begin{equation}\label{2.12}
-\Delta_yw+w^p=0\text{ in }G
\end{equation}
\begin{equation}\label{2.13}
w(y)=0\text{ on }\partial G\setminus\{y:y_N=0\}
\end{equation}
\begin{equation}\label{2.14}
w\mid _{\partial G\cap\{y_N=0\}}=\delta_0(y).
\end{equation}
It is obvious (due to comparison principle) that $w(\cdot)$ satisfies the estimate:
\begin{equation}\label{2.15}
0\leqslant w(y)\leqslant P_0(y,0)=
\frac{c_Ny_N}{(y_1^2+y_2^2+...+y_N^2)^\frac N2}
\quad\forall\,y\in G,
\end{equation}
where $P_0(\cdot,\cdot)$ is the Poisson kernel from \eqref{1.5}.
Therefore, function $w^{(y_1)}(y'):=w(y_1,y')$ has the following properties:
\begin{equation}\label{2.17}
\begin{aligned}
  &\mid w^{(y_1)}(\cdot)\mid _{L^\infty\left(B_{1,+}^{N-1}\right)}<\infty\ \forall\,y_1\neq0,
  \\ &\varphi_w(y_1):=\mid w^{(y_1)}(\cdot)\mid _{L^\infty\left(B_{1,+}^{N-1}\right)}\to0\text{ as }\mid y_1\mid \to\infty,
  \end{aligned}
\end{equation}
where $B_{1,+}^{N-1}=\left\{y'=(y_2,...,y_N)\in\mathbb{R}^{N-1}: \mid y'\mid <1,\ y_N>0\right\}$.
Let us fix an arbitrary value $y_1^{(0)}>0$ and consider solution $w$ as a solution of equation \eqref{2.12} in the infinite cylindrical domain $G\cap\left\{y:y_1>y_1^{(0)}\right\}$ satisfying boundary condition:
\begin{equation}\label{2.18}
w(y)=0\text{ on }\partial G\cap\left\{y:y_1>y_1^{(0)}\right\}.
\end{equation}
Due to lemma~3.1 from \cite{MSh1,MSh1-1}, for the solution $w$ there exists a number $\alpha>0$ such that
\begin{equation}\label{2.19}
\lim_{y_1\to\infty}\exp\left(\sqrt{\lambda_1}\left(y_1-y_1^{(0)}\right)\right)w(y) =\alpha\psi_1(y'),\quad\max_{y'\in B_{1,+}^{N-1}}\psi_1(y')=\psi_1(\tilde{y}')=1,
\end{equation}
uniformly in $B_{1,+}^{N-1}$. Here $\lambda_1>0$ is the first eigenvalue and $\psi_1$ is the corresponding normalized eigenfunction of $-\Delta$ in $B_{1,+}^{N-1}$, constant $\alpha=\alpha(y_1^{(0)})$ satisfies estimate:
\begin{equation}\label{2.20}
0<\alpha\leqslant c\quad\sup_{y'\in B_{1,+}^{N-1}} w^{(y_1^{(0)})}(y')=c\varphi_w(y_1^{(0)}),
\end{equation}
where $c<\infty$ doesn't depend on solution $w$, function $\varphi_w(\cdot)$ is from \eqref{2.17}.

\begin{remark}
Lemma 3.1 is proved for the cylindrical domain $G=\mathbb{R}^1\times B_1^{N-1}$ in \cite{MSh1,MSh1-1}. But its proof is based on the results of \S2 of the paper \cite{B1}, which are true for a much more general class of cylindrical domains, particularly, for $G=\mathbb{R}^1\times B_{1,+}^{N-1}$.
\end{remark}

Thus, due to \eqref{2.17}, \eqref{2.11}, \eqref{2.7}, it follows from \eqref{2.19} the existence of a constant $\beta:y_1^{(0)}<\beta<\infty$, which does not depend on $j$, such that
\begin{multline}\label{2.21}
\frac{\alpha}{2A_j}\psi_1\left(r_j^{-1}x'\right) \exp\biggr(-\sqrt{\lambda_1}\biggr(\frac{x_1}{r_j}-y_1^{(0)}\biggr)\biggr) \leqslant v_j(x)\leqslant \\ \leqslant\frac{2\alpha}{A_j}\psi_1\left(r_j^{-1}x'\right) \exp\biggr(-\sqrt{\lambda_1}\biggr(\frac{x_1}{r_j}-y_1^{(0)}\biggr)\biggr)
\quad\forall\,x\in\Omega_j:x_1>\beta r_j,\ \forall\,j\in\mathbb{N}.
\end{multline}
Due to \eqref{2.6} inequality \eqref{2.21} yields the first rough estimate from below of the solution $u_j$:
\begin{equation}\label{2.22}
u_j(x)\geqslant v_j(x)\geqslant B_j\psi_1\left(r_j^{-1}x'\right)\exp\left(-\sqrt{\lambda_1}\frac{x_1}{r_j}\right) \quad\forall\,x\in\Omega_j:x_1>\beta r_j,
\end{equation}
where $B_j=(2A_j)^{-1}\alpha(y_1^{(0)})\exp\left(\sqrt{\lambda_1}y_1^{(0)}\right)$. Let us define number $\tau_j>0$ by the following relation:
\begin{equation}\label{2.23} B_j\exp\left(-\sqrt{\lambda_1}\frac{\tau_j}{r_j}\right)=A_{j-1}^{-1} =\left(a_{j-1}r_{j-1}^2\right)^{-\frac{1}{p-1}},
\end{equation}
which yields by simple computations:
\begin{equation}\label{2.24}
\frac{\tau_j}{r_j}= \frac{\mu(r_j)-\mu(r_{j-1})}{\sqrt{\lambda_1}(p-1)}+c_1,\quad
c_1=y_1^{(0)}+\frac{\ln\alpha}{\sqrt{\lambda_1}}+\frac{(3-p)\ln2}{(p-1)\sqrt{\lambda_1}}.
\end{equation}
By condition \eqref{1.9}, we have $(\mu(r_j)-\mu(r_{j-1}))\to\infty$ as $j\to\infty$. Hence, there exists a number $j'$, such that
\begin{equation}\label{2.25}
\frac{r_j\left(\mu(r_j)-\mu(r_{j-1})\right)}{\sqrt{\lambda_1}(p-1)}
\leqslant\tau_j\leqslant \frac{2r_j\left(\mu(r_j)-\mu(r_{j-1})\right)}{\sqrt{\lambda_1}(p-1)}
\quad\forall\,j>j'
\end{equation}
and, additionally,
\begin{equation}\label{2.26}
\tau_j>\beta r_j\quad\forall\,j\geqslant j'.
\end{equation}
As a consequence of definition \eqref{2.23}, estimate \eqref{2.22} implies:
\begin{equation}\label{2.27}
u_j(\tau_j,x')\geqslant v_j(\tau_j,x')\geqslant A_{j-1}^{-1}\psi_1\left(r_j^{-1}x'\right)
\quad\forall\,j>j'.
\end{equation}
Let us fix arbitrary large $j>j'$ in \eqref{2.27} and consider a sequence $\left\{v_{j-k}(x)\right\}$, $k=1,2, ... ,j-j'$ ($j'$ is from \eqref{2.25}, \eqref{2.26}) of solutions of the following boundary value problems
\begin{equation}\label{2.28}
-\Delta v_{j-k}+a_{j-k}v_{j-k}^p=0\text{ in } \Omega_{j-k}\cap\{x_1>\tau_j+...+\tau_{j-k+1}\},
\end{equation}
\begin{equation}\label{2.29}
v_{j-k}=0\text{ on }\partial\Omega_{j-k}
\cap\biggr\{x_1>\sum_{i=0}^{k-1}\tau_{j-i}\biggr\},
\end{equation}
\begin{equation}\label{2.30}
v_{j-k}\biggr(\sum_{i=0}^{k-1}\tau_{j-i},x'\biggr)=\gamma_{j-k}(x') :=
\begin{cases}
A_{j-k}^{-1}\psi_1\left(\frac{x'}{r_{j-k+1}}\right)
&\text{if
}\mid x'\mid <r_{j-k+1},\\
0 &\text{if }r_{j-k+1}\leqslant\mid x'\mid \leqslant r_{j-k}.
\end{cases}
\end{equation}
where $\Omega_{j-k}=\left\{x\in\Omega:\mid x'\mid <r_{j-k}\right\}$, $A_{j-k}=\left(a_{j-k}r_{j-k}^2\right)^\frac1{p-1}$, $a_{j-k}=\exp(-\mu(r_{j-k}))$. To define the sequence $\{\tau_{j-k}\}$, $k=1,2,...$, we need to transform problem \eqref{2.28}-\eqref{2.30} to new variables:
\begin{equation}\label{2.31}
y=r_{j-k}^{-1}x,\ w_{j-k}(y):=A_{j-k}v_{j-k}(r_{j-k}y).
\end{equation}
It is easy to see that all these functions $w_{j-k}(y)$ can be obtained as a shift of the unique function $w(y)$:
\begin{equation}\label{2.31*}
w_{j-k}(y_1,y'):=w\biggr(y_1-\sum_{i=0}^{k-1}\tau_{j-i}r_{j-k}^{-1},y'\biggr),
\end{equation}
where $w(y)$ is a solution of the problem:
\begin{equation}\label{2.32}
-\Delta_yw+w^p=0\text{ in } G\cap\{y_1>0\},
\end{equation}
\begin{equation}\label{2.33}
w=0\text{ on }\partial G
\cap\left\{y_1>0\right\},
\end{equation}
\begin{equation}\label{2.34}
w(0,y'):=
\begin{cases}
\psi_1\left(2y'\right)
&\text{if
}\mid y'\mid <2^{-1},\\
0 &\text{if }2^{-1}\leqslant\mid y'\mid \leqslant1.
\end{cases}
\end{equation}
Due to lemma~3.1 from \cite{MSh1,MSh1-1} there exists a number $\alpha_1>0$, such that the function $w$ has the following property:
\begin{equation}\label{2.35}
\lim_{y_1\to\infty}\exp\left(\sqrt{\lambda_1}y_1\right)w(y)= \alpha_1\psi_1(y').
\end{equation}
Here constant $\alpha_1>0$ satisfies the estimate
\begin{equation*}
\alpha_1\leqslant c \sup_{y'\in B_{1,+}^{N-1}}\psi_1(2y')=c
\end{equation*}
with constant $c$, the same as in \eqref{2.20}. As in \eqref{2.21}, by definition \eqref{2.31*}, property \eqref{2.35} implies the existence of a constant $\beta_1<\infty$, which does not depend on $j$ and $k\leqslant j-1$, such that
\begin{multline}\label{2.36}
\frac{\alpha_1}{2A_{j-k}}\psi_1(r_{j-k}^{-1}x') \exp\left(\frac{-\sqrt{\lambda_1}(x_1-h_{j,k})}{r_{j-k}}\right)\leqslant v_{j-k}(x)\leqslant \frac{2\alpha_1}{A_{j-k}}\psi_1(r_{j-k}^{-1}x')\times \\ \times\exp\left(\frac{-\sqrt{\lambda_1}(x_1-h_{j,k})}{r_{j-k}}\right) \quad\forall\,x\in\Omega_{j-k}:x_1\geqslant h_{j,k}+r_{j-k}\beta_1; h_{j,k}:=\sum_{i=0}^{k-1}\tau_{j-i}.
\end{multline}
Let us define value $\tau_{j-k}$ by the relation:
\begin{equation}\label{2.37}
\frac{\alpha_1}{2A_{j-k}} \exp\left(-\sqrt{\lambda_1}\frac{\tau_{j-k}}{r_{j-k}}\right) =A_{j-k-1}^{-1},\quad k=1,2,... .
\end{equation}
Using the nonnegativity of $u_j$ in $\Omega$, properties \eqref{2.27} and boundary condition \eqref{2.30} with $k=1$, we get:
\begin{equation*}
u_j(\tau_j,x')\geqslant v_{j-1}(\tau_j,x')\quad \forall\,x':\mid x'\mid <r_j.
\end{equation*}
Using additionally that $v_{j-1}(\tau_j,x')=0$ if $r_j<\mid x'\mid <r_{j-1}$, we obtain
\begin{equation}\label{2.38}
u_j(\tau_j,x')\geqslant v_{j-1}(\tau_j,x')\quad \forall\,x':\mid x'\mid <r_{j-1}.
\end{equation}
Now using the comparison principle for solution $u_j$ and subsolution $v_{j-1}$ of equation \eqref{1.1} in the domain $\Omega_{j-1}\cap\{x_1>\tau_j\}$, we obtain
\begin{equation}\label{2.39}
u_j(x)\geqslant v_{j-1}(x)\quad \forall\,x\in\Omega_{j-1}\cap\{x_1>\tau_j\}.
\end{equation}
Next we will establish the main intermediate inequality:
\begin{equation}\label{2.40}
u_j(x)\geqslant v_{j-k}(x)\quad \forall\,x\in\Omega_{j-k}\cap \biggr\{x_1\geqslant\sum_{i=0}^{k-1}\tau_{j-i}\biggr\}\ \forall\,k:1\leqslant k<j-j'',
\end{equation}
where $j''<j$ does not depend on $j$. In virtue of \eqref{2.39} inequality \eqref{2.40} is true for $k=1$. Let us suppose that \eqref{2.40} holds for some $k\geqslant1$. Then we have to prove that
\begin{equation}\label{2.41}
u_j(x)\geqslant v_{j-k-1}(x)\quad \forall\,x\in\Omega_{j-k-1}\cap \Big\{x_1\geqslant\sum_{i=0}^{k}\tau_{j-i}\Big\}.
\end{equation}
To do this, it is sufficient, due to the comparison principle, to show that
\begin{equation}\label{2.42}
u_j\Big(\sum_{i=0}^{k}\tau_{j-i},x'\Big)\geqslant v_{j-k-1}\Big(\sum_{i=0}^{k}\tau_{j-i},x'\Big).
\end{equation}
From boundary condition \eqref{2.30} for the function $v_{j-k-1}(x)$ we have:
\begin{equation}\label{2.43}
v_{j-k-1}\Big(\sum_{i=0}^{k}\tau_{j-i},x'\Big)=
\begin{cases}
  A_{j-k-1}^{-1}\psi_1\left(\frac{x'}{r_{j-k}}\right), & \mbox{if } \mid x'\mid \leqslant r_{j-k} \\
  0, & \mbox{if }r_{j-k}<\mid x'\mid \leqslant r_{j-k-1}.
\end{cases}
\end{equation}
Now if number $\tau_{j-k}$, defined by relation \eqref{2.37}, satisfies additionally the following inequality:
\begin{equation}\label{2.44}
\tau_{j-k}\geqslant\beta_1r_{j-k}\text{ with }\beta_1\text{ from }\eqref{2.36},
\end{equation}
then relation \eqref{2.36} yields:
\begin{equation*}%\label{2.44}
v_{j-k}\Big(\sum_{i=0}^k\tau_{j-i},x'\Big)\geqslant \frac{\alpha_1}{2A_{j-k}}\psi_1(r_{j-k}^{-1}x') \exp\Big(-\frac{\sqrt{\lambda_1}\tau_{j-k}}{r_{j-k}}\Big).
\end{equation*}
In virtue of definition \eqref{2.37} of $\tau_{j-k}$ the last inequality leads to:
\begin{equation}\label{2.45}
v_{j-k}\Big(\sum_{i=0}^k\tau_{j-i},x'\Big)\geqslant A_{j-k-1}^{-1}\psi_1(r_{j-k}^{-1}x')\quad \forall\,x':\mid x'\mid \leqslant r_{j-k}.
\end{equation}
Since $u_j\left(\sum_{i=0}^k\tau_{j-i},x'\right)\geqslant0= v_{j-k-1}\left(\sum_{i=0}^k\tau_{j-i},x'\right)$ if $r_{j-k}\leqslant\mid x'\mid \leqslant r_{j-k-1}$, relations \eqref{2.43} and \eqref{2.45} lead to \eqref{2.42} under the comparison principle. Thus, inequality \eqref{2.40} is proved for all $k\geqslant1$, such that estimate \eqref{2.44} holds for $\tau_{j-k}$, defined by \eqref{2.37}. Using standard computations, we deduce from definition \eqref{2.37}:
\begin{equation*}
\frac{\tau_{j-k}}{r_{j-k}}=\frac{\mu(r_{j-k})-\mu(r_{j-k-1})} {\sqrt{\lambda_1}(p-1)}+c_2,\quad c_2=\frac{\ln\alpha_1}{\sqrt{\lambda_1}}+\frac{(3-p)\ln2}{(p-1)\sqrt{\lambda_1}}.
\end{equation*}
Due to condition \eqref{1.9} on the function $\mu(\cdot)$ it follows from the last relation that there exists a constant $\widetilde{j}<\infty$, which does not depend on $j$, such that the following inequalities hold:
\begin{multline}\label{2.46}
\frac{r_{j-k}\left(\mu(r_{j-k})-\mu(r_{j-k-1})\right)}{\sqrt{\lambda_1}(p-1)}
\leqslant\tau_{j-k}\leqslant \frac{2\left(\mu(r_{j-k})-\mu(r_{j-k-1})\right)r_{j-k}}{\sqrt{\lambda_1}(p-1)},
\\ \forall\,k<j-\widetilde{j}.
\end{multline}
Additionally, it follows from condition \eqref{1.9} the existence of a number $\widetilde{j}_1=\widetilde{j}_1(\beta_1)<\infty$, such that
\begin{equation}\label{Ad11}
\frac{\mu(r_{j-k})-\mu(r_{j-k-1})}{\sqrt{\lambda_1}(p-1)}\geqslant\beta_1\quad\forall\,j\in\mathbb{N},\ \forall\,k<j:j-k\geqslant\widetilde{j}_1.
\end{equation}
Therefore, the main intermediate inequality \eqref{2.40} holds with $j''=\max\{j',\widetilde{j},\widetilde{j}_1\}$.
Moreover, condition \eqref{1.9} yields the existence of a constant ${\ae}<1$ and a number $\widehat{j}<\infty$, such that
\begin{equation}\label{2.47}
\mu(r_{j-k-1})\mu(r_{j-k})^{-1}<{\ae}<1\quad\forall\,j\in\mathbb{N},\ \forall\,k<j:j-k>\widehat{j}.
\end{equation}
Therefore, it follows from \eqref{2.25}, \eqref{2.46} that
\begin{equation}\label{2.48}
\frac{2\mu(r_{j-k})}{\sqrt{\lambda_1}(p-1)} \geqslant\frac{\tau_{j-k}}{r_{j-k}}\geqslant \frac{(1-{\ae})\mu(r_{j-k})}{\sqrt{\lambda_1}(p-1)}\quad
\forall\,j\in\mathbb{N},\ \forall\,k<j:j-k>\max\{j'',\widehat{j}\}.
\end{equation}
Hence, by definition \eqref{1.4} of $\mu(\cdot)$, we have:
\begin{multline}\label{2.49}
\sum_{k=0}^{j-i}\tau_{j-k}\geqslant {\ae}_1\sum_{k=0}^{j-i}\omega(r_{j-k})\geqslant {\ae}_1\sum_{k=0}^{j-i}\int_{r_{j-k}}^{r_{j-k-1}}\frac{\omega(s)}sds= {\ae}_1\int_{r_{j}}^{r_{i-1}}\frac{\omega(s)}sds
\\ \forall\,j,i:j>i>j^{(1)}:=\max\{j'',\widehat{j}\}=\max\{j',\widetilde{j},\widetilde{j}_1,\widehat{j}\},\ {\ae}_1:=\frac{1-{\ae}}{\sqrt{\lambda_1}(p-1)}.
\end{multline}
Additionally, the left inequality in \eqref{2.48} and assumption \eqref{1.9} on the function $\omega(\cdot)$ imply that $\tau_i\to0$ as $i\to\infty$. Therefore, for an arbitrary fixed number $g>0$ there exists a number $i_g$, such that $\tau_i<g$ $\forall\,i>i_g$. Then, by virtue of condition \eqref{1.10} and inequalities \eqref{2.49}, there exists a number $\bar{j}=\bar{j}(i,g)<\infty$, such that
\begin{equation}\label{2.50}
g<\sum_{k=0}^{\overline{j}-i}\tau_{\overline{j}-k}=\sum_{k=0}^{\overline{j}-i}\tau_{i+k},\ \sum_{k=1}^{\overline{j}-i}\tau_{i+k}\leqslant g\quad\forall\,i>j^{(2)}:=\max\left\{j^{(1)},i_g\right\}.
\end{equation}
Notice that $\overline{j}(i,g)-i\to\infty$ as $i\to\infty$. Let us rewrite proved relation \eqref{2.40} in the following equivalent form:
\begin{equation}\label{2.40*}
u_j(x)\geqslant v_{i}(x)\quad \forall\,x=(x_1,x')\in\Omega_{i}\cap \Big\{x_1\geqslant\sum_{k=1}^{j-i}\tau_{i+k}\Big\}\ \forall\,j>i>j''.
\end{equation}
Let us define a sequence of points $\{x^{(i)}\}$:
\begin{equation*}
\begin{split}
  &x^{(i)}=\left(x^{(i)}_1,x^{'(i)}\right): x'^{(i)}=r_{i+1}\,\tilde{y}',\ \tilde{y}'\text{ is from \eqref{2.19}},\\
  &x_1^{(i)}=\sum_{k=0}^{\overline{j}-i}\tau_{i+k}=\sum_{k=0}^{\overline{j}-i}\tau_{\overline{j}-k}\quad \forall\,i>j^{(2)},\ \overline{j}=\overline{j}(i,g)\text{ is from \eqref{2.50}}.
\end{split}
\end{equation*}
We deduce from \eqref{2.30} after a simple transformation:
\begin{equation*}
v_i\Big(\sum_{k=1}^{\overline{j}-i}\tau_{i+k},x'\Big)=A_i^{-1} \psi_1(r_{i+1}^{-1}x')\quad\forall\,x':\mid x'\mid <r_{i+1}.
\end{equation*}
Therefore using \eqref{2.50} and the definition of point $x^{(i)}$ we get in virtue of \eqref{2.19}:
\begin{equation}\label{2.51}
v_i\left(g+\lambda_i\tau_{i},x'^{(i)}\right)=A_i^{-1} \psi_1(r_{i+1}^{-1}x'^{(i)})=A_i^{-1} \quad\forall\,i>j^{(2)},\ 0\leqslant\lambda_i<1.
\end{equation}
Let us define sequence $X^{(i)}=(X_1^{(i)},x'^{(i)})$, $X_1^{(i)}=g+\lambda_i\tau_i$. Then since $A_i^{-1}\to\infty$ as $i\to\infty$ we deduce from \eqref{2.40*} and \eqref{2.51}:
\begin{equation}\label{2.52}
u_\infty(X^{(i)})\geqslant u_{\overline{j}}(X^{(i)})\geqslant v_i(X^{(i)}) =A_i^{-1}\to\infty\text{ as }i\to\infty,
\end{equation}
where $\overline{j}=\overline{j}(i,g)$ is from \eqref{2.50}. Since $X_1^{(i)}\to g$ and $x'^{(i)}\to0$ as $i\to\infty$, relation \eqref{2.52} yields $u_\infty(g,0,...,0)=\infty$. Since $(g,0,...,0)$ is an arbitrary point from $L\cap\{\mathbb{R}_+^1\}=L_+$ then $u_\infty\mid _{L\cap\{\mathbb{R}_+^1\}}=\infty$. Now we return to the model problem \eqref{2.12}--\eqref{2.14} and consider its solution $w$ as a solution of equation \eqref{2.12} in the infinite cylindrical domain $G\cap\{y:y_1<y_1^{(0)}<0\}$, satisfying boundary condition: $w(y)=0$ on $\partial G\cap\{y:y_1<y_1^{(0)}<0\}$. If we repeat all above analysis using this solution $w$ in the mentioned cylindrical domain, we obtain that $u_\infty\mid _{L\cap\{\mathbb{R}_-^1\}}=\infty$. Theorem~\ref{Th.1} is proved.
%\end{proof}

\vskip3.5mm
{\bf Acknowledgements.} The research of A.~Shishkov for this paper has been supported by the RUDN University Strategic Academic Leadership Program.

\vskip 20 pt

\end{document}